\newtheorem{theo}{Theorem}[section]
\newtheorem{prop}[theo]{Proposition}
\newtheorem{lemma}[theo]{Lemma}
\newtheorem{defn}[theo]{Definition}
\newtheorem{ex}[theo]{Example}
\tikzset{->-/.style={decoration={
  markings,
  mark=at position .5 with {\arrow{>}}},postaction={decorate}}}
\tikzset{-<-/.style={decoration={
  markings,
  mark=at position .5 with {\arrow{<}}},postaction={decorate}}}
\begin{document}

\title[$\mathfrak{gl}(1 \vert 1)$-Alexander polynomial for $3$-manifolds]{$\mathfrak{gl}(1 \vert 1)$-Alexander polynomial for $3$-manifolds}
\author{Yuanyuan Bao and Noboru Ito}

\address{
Graduate School of Mathematical Sciences, University of Tokyo, 3-8-1 Komaba, Tokyo 153-8914, Japan
}
\email{bao@ms.u-tokyo.ac.jp}

\address{National Institute of Technology, Ibaraki College, 866 Nakane, Hitachinaka, Ibaraki, 312-8508, Japan
}
\email{nito@ibaraki-ct.ac.jp}

\keywords{$\mathfrak{gl}(1\vert 1)$-Alexander polynomial, Kirby calculus, $3$-manifold invariant.}
\subjclass[2020]{Primary 57K10, 57K16, 57K31}

\maketitle

\begin{abstract}
As an extension of Reshetikhin and Turaev's invariant, Costantino, Geer and Patureau-Mirand constructed $3$-manifold invariants in the setting of relative $G$-modular categories, which include both semisimple and non-semisimple ribbon tensor categories as examples. In this paper, we follow their method to construct a $3$-manifold invariant from Viro's $\mathfrak{gl}(1\vert 1)$-Alexander polynomial. 
We take lens spaces $L(7, 1)$ and $L(7, 2)$ as examples to show that this invariant can distinguish homotopy equivalent manifolds.
\end{abstract}

\section{Introduction}

A $3$-manifold in this paper indicates a connected compact closed oriented smooth $3$-manifold. Given a framed link $L$ in $S^3$, the integral surgery along $L$ produces a $3$-manifold. The link $L$ is called a surgery presentation of the resulting manifold. Kirby calculus \cite{MR467753} says that any $3$-manifold can be obtained in this way. In addition, surgery presentations of the same $3$-manifold are related to each other by Kirby moves. 

A linear sum of quantum invariants of framed links defines a topological invariant for $3$-manifolds, if it is invariant under Kirby moves. Reshetikhin and Turaev \cite{MR1091619} gave the first rigorous construction of $3$-manifold invariant along this line. Their invariant was defined for a modular category, which is semisimple and all simple objects are required to have non-zero quantum dimensions. 

Costantino, Geer and Patureau-Mirand \cite{MR3286896} extended Reshetikhin and Turaev's construction to categories which may not be semisimple or may contain objects with zero quantum dimensions. They proposed the concept: relative $G$-modular category and proved that the quantum invariant of framed links constructed from a relative $G$-modular category can be used to define a $3$-manifold invariant. Let $\mathscr{C}$ be a relative $G$-modular category. For a $3$-manifold $M$, a $\mathscr{C}$-ribbon graph $T$ and a cohomology class $\omega: H_1(M\backslash T, \mathbb{Z})\to G$ which satisfy some compatible conditions, let $L$ be a surgery presentation of $M$ with color induced from $\omega$. Then \cite{MR3286896} showed that the quantum invariant of $L\cup T$ after normalization is a topological invariant of $(M, T, \omega)$.

In this paper, we follow the method in \cite{MR3286896} to construct a $3$-manifold invariant. The quantum invariant we use is Viro's $\mathfrak{gl}(1\vert 1)$-Alexander polynomial defined in \cite{MR2255851}. Consider a $1$-palette defined by $(B, G)$ where $B$ is a field of characteristic $0$ and $G\subset B$ is an abelian group. There is a category $\mathcal{M}_B$ of finite dimensional modules over a subalgebra $U^1$ of  $U_{q}(\mathfrak{gl}(1 \vert 1))$, the quantum group of the Lie superalgebra $\mathfrak{gl}(1 \vert 1)$. The category $\mathcal{M}_B$ is not semisimple and the objects of which have zero quantum dimensions. Viro defined a functor from the category of trivalent graphs to $\mathcal{M}_B$. For a colored graph $\Gamma$, the Alexander polynomial $\Delta (\Gamma)$ is defined using this functor. 

Now consider a triple $(M, \Gamma, \omega)$, where $M$ is a $3$-manifold, $\Gamma$ is a trivalent graph colored by objects of $\mathcal{M}_B$ and $\omega: H_1(M\backslash \Gamma, \mathbb{Z})\to G$ is a cohomology class. We assume that $(M, \Gamma, \omega)$ satisfies certain compatible conditions. Here is our main result. The definitions of Kirby color and computable surgery presentation will be given in Section 3.3 and Section 4.

\begin{theo} 
\label{mainresult1}
For the $1$-palette $(B, G)$ where $G$ contains $\mathbb{Z}$ but no $\mathbb{Z}/2\mathbb{Z}$ as a subgroup, let $(M, \Gamma, \omega)$ be a compatible triple. Let $L$ be a computable surgery presentation of $(M, \Gamma, \omega)$. Then  
$$\Delta (M, \Gamma, \omega):=\frac{\Delta (L\cup \Gamma)}{2^{r(L)}(-1)^{\sigma_{+}(L)}}$$
is a topological invariant of $(M, \Gamma, \omega)$, where $r(L)$ is the component number of $L$ and $\sigma_{+}(L)$ is the number of positive eigenvalues of the linking matrix of $L$. Here each component $K$ of $L$ has Kirby color $\Omega(\omega([m_K]), 1)$, where $m_K$ is the meridian of $K$. 
\end{theo}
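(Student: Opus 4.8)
The plan is to follow the strategy of \cite{MR3286896}: by Kirby calculus \cite{MR467753}, any two surgery presentations of a fixed $3$-manifold $M$ are connected by a finite sequence of the two Kirby moves, namely the stabilization move (adjoining or deleting a $\pm 1$-framed unknot split from the rest of $L$) and the handle-slide move (sliding one component of $L$ over another). Thus it suffices to prove that the normalized quantity $\Delta(L\cup\Gamma)/(2^{r(L)}(-1)^{\sigma_+(L)})$ is unchanged under each move, provided one accounts correctly for how the induced Kirby colors transform. The first step is therefore this bookkeeping: I would record that under a handle slide the meridian of the sliding component is modified by that of the component it is slid over, so the grading $\omega([m_K])$ attached to the affected component changes in a prescribed way, and I would check that the new data still forms a computable surgery presentation of the same triple $(M,\Gamma,\omega)$.

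For the handle-slide move the central ingredient is a sliding (encircling) property of the Kirby color: a meridian colored by $\Omega(g,1)$ that encircles a bundle of strands can be absorbed into those strands without altering $\Delta(L\cup\Gamma)$, after the grading shift noted above. I would derive this from the functoriality of Viro's construction together with the defining absorbing property of the Kirby color stated in Section 3.3, and observe that a handle slide leaves both $r(L)$ and the linking-matrix signature $\sigma_+(L)$ unchanged, so that the full normalized quantity is preserved.

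For the stabilization move I would compute the contribution of a disjoint $\pm 1$-framed unknot carrying the induced Kirby color by evaluating it through Viro's functor. I expect this split component to evaluate to the scalar $2(-1)^{\delta}$, where $\delta=1$ for the $+1$-framing and $\delta=0$ for the $-1$-framing. Since adjoining it raises $r(L)$ by one and $\sigma_+(L)$ by exactly $\delta$, the denominator $2^{r(L)}(-1)^{\sigma_+(L)}$ is scaled by precisely the same factor, and the normalized quantity is again unchanged.

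The main obstacle I anticipate lies in carrying out these evaluations inside the non-semisimple category $\mathcal{M}_B$, every object of which has vanishing quantum dimension; in particular the ordinary Reshetikhin--Turaev closure of the split unknot would simply be zero, so the scalar $2(-1)^{\delta}$ above must instead be extracted from the renormalized (modified-trace) invariant on which $\Delta$ is built. Establishing that this renormalized evaluation yields the asserted nonzero scalar, together with showing that the hypothesis that $G$ contains $\mathbb{Z}$ but no $\mathbb{Z}/2\mathbb{Z}$ guarantees the existence of, and allows one to stay within, computable (generically colored) surgery presentations throughout the sequence of Kirby moves, is where the substantive work is concentrated.
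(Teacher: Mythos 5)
Your overall architecture (invariance move by move, handle slides preserving $r(L)$ and $\sigma_+(L)$, blow-ups contributing $\pm 2$ against the normalization $2^{r(L)}(-1)^{\sigma_+(L)}$) matches the paper, but your choice of move set contains a genuine error. You invoke the classical Kirby theorem, in which stabilization adjoins a $\pm 1$-framed unknot \emph{split} from the rest of $L$. In this setting that move is unusable, for two concrete reasons. First, the meridian $m_U$ of a split $\pm 1$-framed unknot $U$ satisfies the relation $\pm[m_U]=0$ in $H_1(M\backslash\Gamma,\mathbb{Z})$, so $\omega([m_U])=1$; the induced Kirby color would be $\Omega(1,1)$, which does not exist (multiplicities must avoid $t^4=1$, and $d(1)=1/(1^2-1^{-2})$ is undefined), and the presentation is by definition not computable. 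Second, even formally, $\Delta$ is Viro's polynomial of the whole diagram obtained by cutting a single edge; a split closed component multiplies the evaluation by its closed evaluation, which vanishes since every object has zero quantum dimension. There is no per-component modified-trace evaluation built into $\Delta$ from which your scalar $2(-1)^{\delta}$ could be ``extracted''; your proposed rescue does not reflect how the invariant is defined. The paper avoids this entirely by using the refined Kirby theorem of Costantino--Geer--Patureau-Mirand (Theorem~\ref{kirby}): blow-up/down is performed \emph{along a strand} of $L\cup\Gamma$, so the created circle links that strand once, inherits the strand's nontrivial multiplicity, and evaluates to $\pm 2$ times the strand by Lemma~\ref{up-down} --- this is the correct replacement for your stabilization step.

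A second, smaller gap: you defer to ``substantive work'' the problem that intermediate presentations in a chain of handle slides may fail to be computable, but this is where the bulk of the paper's proof actually lives, and it is not routine. The handle-slide invariance (Lemma~\ref{handle-slide}) requires both $(ts)^4\neq 1$ and the colored-linking-number condition $clk(K,\Lambda)=1$ --- the latter holds precisely because the colors are induced by a class $\omega$ on $H_1(M\backslash\Gamma,\mathbb{Z})$, a hypothesis your sketch never isolates, and it is what allows the weight shift $S\mapsto S-4$ (via Lemma~\ref{zero}) that makes the junction-relation computation close up. To keep all intermediate presentations computable, the paper adjoins an auxiliary meridian $m_e$ colored by a generic $\alpha$ (when $\Gamma\neq\emptyset$), slides it to repair any component whose multiplicity becomes $1$, imposes finitely many genericity conditions on $\alpha$ (Conditions 1 and 2, solvable because $G$ contains a $\mathbb{Z}$ summand), and then undoes the auxiliary circle, tracking the correction factor $\langle H\rangle=\alpha^{-2N}(\beta^2-\beta^{-2})$; the case $\Gamma=\emptyset$ needs a separate device (two oppositely framed blow-ups creating $m_\pm$, with $\Delta(L_k\cup\tilde\Gamma)=-4\Delta(L_k)$). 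Without this mechanism, your proof establishes invariance only along chains that happen to stay computable, which is strictly weaker than the theorem.
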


Our strategy is as follows. Instead of proving that $\mathcal{M}_B$ has a relative $G$-modular category structure, we show directly that the value $\Delta (M, \Gamma, \omega)$ is invariant under Kirby moves. So the flavor of this paper is quite combinatorial without involving many algebras. However we believe the existence of a relative $G$-modular category structure on $\mathcal{M}_B$ so that the corresponding invariant is the one given in Theorem \ref{mainresult1}. We hope to discuss this topic in our future work. In the definitions of compatible triple, Kirby color and the proof of Theorem \ref{mainresult1}, we imitate many ideas from \cite{MR3286896}. 

The authors of \cite{MR3286896} discussed in detail how to define the 3-manifold invariant in the context of quantum $\mathfrak{sl}(2)$. For any finite-dimensional simple complex Lie algebra $\mathfrak{g}$, they also showed the existence of relative $G$-modular category associated with certain version of quantum $\mathfrak{g}$. The representation theory for Lie superalgebras is much more complicated than that of Lie algebras. Based on the concept relative $G$-modular category, NP Ha \cite{MR3785790} constructed 3-manifold invariant from quantum group associated with Lie superalgebra $\mathfrak{sl}(2\vert 1)$. It is not clear to us yet whether $\Delta (M, \Gamma, \omega)$ coincides with any known invariant or not.

The structure of the paper is as follows. In Section 2 we review the definition of Viro's $\mathfrak{gl}(1\vert 1)$-Alexander polynomial for trivalent graphs. We calculate examples and recall junction relations, both of which will be used in subsequent sections. In Section 3, we give the definitions of compatible triple and Kirby color, and discuss how the $\mathfrak{gl}(1\vert 1)$-Alexander polynomial changes under Kirby moves. In Section 4, we state the main result and give the proof. In Section 5 we discuss examples and calculations of this invariant. In particular, we take lens spaces $L(7, 1)$ and $L(7, 2)$ as example to show that the invariant $\Delta (M, \Gamma, \omega)$ can distinguish homotopy equivalent manifolds. 

\medskip

\noindent{\bf Acknowledgements}
The authors would like to thank Prof.~Jun Murakami and Dr. Atsuhiko Mizusawa for their helpful discussions. They also would like to thank Prof.~Tetsuya Ito for suggesting us compute our invariant for $L(7, 1)$ and $L(7, 2)$. The first author was partially supported by JSPS KAKENHI Grant Number JP20K14304. The second author was partially supported by JSPS KAKENHI Grant Number 20K03604 and Toyohashi Tech Project of Collaboration with KOSEN.


\medskip

\section{Viro's $\mathfrak{gl}(1\vert 1)$-Alexander polynomial for trivalent graphs}
Viro \cite{MR2255851} defined a functor from the category of colored framed oriented trivalent graphs to the category of finite dimensional modules over a subalgebra $U^1$ of the $q$-deformed universal enveloping superalgebra $U_{q}(\mathfrak{gl}(1 \vert 1))$. Using this functor, in \cite[Sect. 6]{MR2255851}, he defined the $\mathfrak{gl}(1\vert 1)$-Alexander polynomial for a trivalent graph. We recall how this polynomial is calculated. For the algebraic structures of $U^1$ and $U_{q}(\mathfrak{gl}(1 \vert 1))$, please read \cite[$\S$11: Appendix]{MR2255851}. 

\subsection{Colored framed graphs}

A {\it $1$-palette} (see \cite[2.8]{MR2255851}) is a quadruple $$(B, G, W, G\times W \to G),$$ where $B$ is a commutative ring with unit, $G$ is a subgroup of the multiplicative group of $B$, $W$ is a subgroup of the additive group of $B$ which contains the unit $1$ of $B$, and $G\times W \to G: (t, N)\mapsto t^{N}$ is a bilinear map satisfying $t^1=t$ for each $t\in G$. 
In this paper, we consider the case that $B$ is a field of characteristic $0$. Let $G$ is a subgroup of the multiplicative group of $B$, which is abelian, and let $W=\mathbb{Z}$ and $G\times \mathbb{Z} \to G: (t, N)\mapsto t^{N}$. Obviously $(B, G, W, G\times W \to G)$ becomes a $1$-palette. Since $W$ and $G\times \mathbb{Z} \to G$ have specific definitions, we suppress them and use $(B, G)$ to denote the $1$-palette. In this paper, when we say a $1$-palette, we mean a $1$-palette defined in this way.

Let $T$ be an oriented trivalent graph, and let $E$ be the set of edges of $T$. Consider a map which we call a coloring
\begin{eqnarray*}
c=(\mathrm{mul}, \mathrm{wt}): E &\to& G\backslash \{g\in G \mid g^4=1\}\times \mathbb{Z}\\
e &\mapsto& (t, N).
\end{eqnarray*} 
The first number $t=\mathrm{mul}(e)$ is called the {\it multiplicity} and the second number $N=\mathrm{wt}(e)$ is called the {\it weight}. 

Around a vertex, suppose the three edges adjacent to it are colored by $(t_1, N_1)$, $(t_2, N_2)$ and $(t_3, N_3)$. Let $\epsilon_i=-1$ if the $i$-th edge points toward the vertex and $\epsilon_i=1$ otherwise. The coloring $c$ needs to satisfy the following conditions, which are called {\it admissibility conditions} in \cite{MR2255851}:
\begin{align}
\prod_{i=1}^{3} t_{i}^{\epsilon_i}&=1,\hspace{7cm} \label{multi}\\
\sum_{i=1}^{3} \epsilon_i N_i&=-\prod_{i=1}^{3} \epsilon_i.
\end{align}
A vertex is called {\it source} (resp. {\it sink}) if all the adjacent edges have $\epsilon=1$ (resp. $\epsilon=-1$).

Now consider a proper embedding of $T$ into a 3-manifold $M$. We still use $T$ to represent the embedded graph. A {\it framing} of $T$ is an orientable compact surface $F$ embedded in $M$ in which $T$ is sitting as a deformation retract. More precisely, in $F$ each vertex of $T$ is replaced by a disk where the vertex is the center, and each edge of $T$ is replaced by a strip $[0, 1]\times [0, 1]$ where $[0, 1]\times \{0, 1\}$ is attached to the boundaries of its adjacent vertex disks and $\{\frac{1}{2}\}\times [0, 1]$ is the given edge of $T$. 

A {\it framed graph} is a graph with a framing. By an {\it isotopy} of a framed graph we mean an isotopy of the graph in $M$ which extends to an isotopy of the framing. 

For a framed graph, at each source or sink, we can assign an orientation to the boundary of the associated disk, which is regarded as part of the coloring of $T$. Now we are ready to give the following definition.

\begin{defn}
\rm
A {\it colored framed oriented trivalent graph} $\Gamma$ in a 3-manifold $M$ is an oriented trivalent graph $T$ embedded in $M$ with the following three structures:
\begin{itemize}
\item a framing;
\item a coloring on the set of edges which satisfies the admissibility conditions;
\item an orientation of the boundary of the associated disk on each source or sink vertex.
\end{itemize}
\end{defn}

In the following sections, a framed graph means a framed oriented trivalent graph, while a colored framed graph means a colored framed oriented trivalent graph.

When $\Gamma$ is a graph in $S^3$, we can use a graph diagram to represent $\Gamma$, the blackboard framing of which coincides with the framing of $\Gamma$. Around a source or sink vertex, the counter-clockwise orientation is chosen unless otherwise stated.




\subsection{$\mathfrak{gl}(1\vert 1)$-Alexander polynomial}

Let $(B, G)$ be a $1$-palette. 
Suppose $\Gamma$ is a colored framed graph embedded in $S^3$ whose coloring is given by the map $c$ as given in Sect. 2.1. We review the definition of the $\mathfrak{gl}(1\vert 1)$-Alexander polynomial of $\Gamma$, which is denoted by $\Delta(\Gamma)$ or $\Delta(\Gamma; c)$. 

It is known that the pair $(t, N)\in G\backslash \{g\in G \mid g^4=1\}\times \mathbb{Z}$ corresponds to two irreducible $U^1$-modules of dimension $(1 \vert 1)$, which are denoted by $U(t, N)_{+}$ and $U(t, N)_{-}$. These two modules are dual to each other. The module $U(t, N)_{+}$ (resp. $U(t, N)_{-}$) is generated by two elements $e_0$ (boson) and $e_1$ (fermion). For details of their algebraic structures please see Appendix 1 of \cite{MR2255851}. 

Choose a graph diagram of $\Gamma$ in $\mathbb{R}^2$. The diagram divides $\mathbb{R}^2$ into several regions, one of which is unbounded. Choose an edge of $\Gamma$ on the boundary of the unbounded region and cut the edge at a generic point. Suppose the color of the edge is $(t, N)$. Deform the graph diagram under isotopies of $\mathbb{R}^2$ to make it in a Morse position under a given orthogonal coordinate system of $\mathbb{R}^{2}$ so that the two endpoints created by cutting have heights zero and one and the critical points, the crossings, and the vertices of the diagram have different heights between zero and one. Namely after deformation the diagram can be divided into several slices by horizontal lines so that each slice is a disjoint union of trivial vertical segments and one of the six elements in Fig.~\ref{fig2}. Each slice connects a sequence of endpoints on its bottom to a sequence of endpoints on its top. In Example~\ref{example}, we show how the Hopf link is divided into such slices.

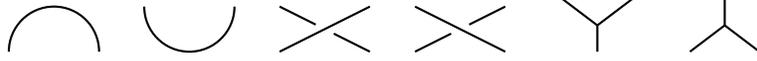
\begin{figure}
\begin{tikzpicture}[baseline=-0.65ex, thick, scale=0.6]
\draw (-1, 0) arc (0:180:1);
\draw (0, 1) arc (180:360:1);
\draw (3, 0) -- (5, 1);
\draw (5, 0) -- (4.2, 0.4);
\draw (3, 1) -- (3.8, 0.6);
\draw (8, 0) -- (6, 1);
\draw (8, 1) -- (7.2, 0.6);
\draw (6, 0) -- (6.8, 0.4);
\end{tikzpicture}\hspace{6mm}
\begin{tikzpicture}[baseline=-0.65ex, thick, scale=0.7]
\draw  (0, 0)  to (0,0.5);
\draw   (0,0.5)   to  (0.66,1);
\draw   (0,0.5)  to  (-0.66,1);
\end{tikzpicture}\hspace{6mm}
\begin{tikzpicture}[baseline=-0.65ex, thick, scale=0.7]
\draw  (0, 0.5)  to (0,1);
\draw   (0,0.5)   to  (0.66,0);
\draw   (0,0.5)   to  (-0.66,0);
\end{tikzpicture}
\caption{Critical points, crossings, and vertices. }
\label{fig2}
\end{figure}

Under Viro's functor, each sequence of endpoints corresponds to the tensor product of irreducible $U^1$-modules of dimension $(1 \vert 1)$, as described below. Suppose the sequence of endpoints is $(p_1, \cdots, p_k)$ for $k\geq 1$, where the subindices represent the $x$-coordinates of the endpoints. Then $(p_1, \cdots, p_k)$ corresponds to the tensor product $$U(t_1, N_1)_{\epsilon_1}\otimes \cdots \otimes U(t_k, N_k)_{\epsilon_k},$$ where $(t_i, N_i)$ is the color of the edge containing $p_i$ and $\epsilon_i=+$ when the edge points upward and $\epsilon_i=-$ otherwise for $1\leq i\leq k$. See Fig.~\ref{fig3}.

\begin{figure}
\centering
\begin{tikzpicture}[baseline=-0.65ex, thick, scale=0.6]
\draw (0, -1)  [->] to (0, 0.5);
\draw (3, -1)  [<-] to (3, 0.5);
\draw (0,-2) node {$U(t, N)_{+}$};
\draw (3,-2) node {$U(t, N)_{-}$};
\draw [dotted] (10, -1)  -- (10, 0.5);
\draw (15, -1)  -- (15, 0.5);
\draw (10,-2) node {$e_0$ (boson)};
\draw (15,-2) node {$e_1$ (fermion)};
\end{tikzpicture}
\caption{Under the coloring $c$, each edge corresponds to an irreducible $U^1$-module. In a state, if an edge is assigned with $e_0$ (resp. $e_1$), we represent it by a dotted (resp. solid) arc.}
\label{fig3}
\end{figure}
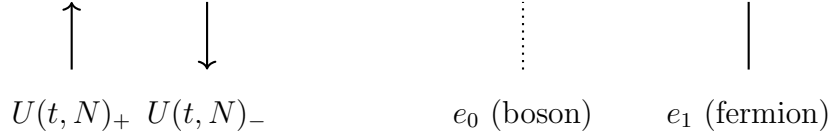

Each slice connects two sequences of endpoints. Under Viro's functor, each slice, read from the bottom to the top, is mapped to a morphism between the corresponding tensor products of irreducible $U^1$-modules.

The morphism is defined in \cite{MR2255851} in the language of Boltzmann weights. Simply speaking, each module $U(t, N)_{+}$ or $U(t, N)_{-}$ has two generators $e_0$ (boson) and $e_1$ (fermion), and therefore 
$U(t_1, N_1)_{\epsilon_1}\otimes \cdots \otimes U(t_k, N_k)_{\epsilon_k}$ is generated by $e_{\delta_1}\otimes e_{\delta_2}\otimes \cdots \otimes e_{\delta_k}$ for $\delta_i=0$ or $1$. The morphism is represented by a matrix under the above choice of generators, and the Boltzmann weights are the entries of the matrix. In Table~\ref{viro1}, we list some Boltzmann weights that we need. For the full table, see Tables 3 and 4 of \cite{MR2255851}.

The composition of two slices (attaching them by identifying the top of the first slice with the bottom of the second slice) corresponds to the composition of their morphisms for $U^1$-modules. As a consequence, the graph diagram in a Morse position with two endpoints of heights zero and one is mapped to a morphism from $U(t, N)_+$ to $U(t, N)_+$ (or $U(t, N)_-$ to $U(t, N)_-$ depending the orientation of $\Gamma$ at the endpoints), which is a scalar of identity (\cite[6.2.A]{MR2255851}). Recall that $(t, N)$ is the color of the edge which was cut. Then multiplying the scalar by the inverse of $t^{2}-t^{-2}$ we get $\Delta(\Gamma)$. In the following paragraphs, we use $(\Gamma)$ to represent the Alexander polynomial of $\Gamma$ when $\Gamma$ is a colored framed graph diagram. 

\begin{ex}
\label{example}
For $u, v\in G\backslash \{g\in G\vert g^4=1\}$, we have
\begin{align*}
 \left ( \begin{tikzpicture}[baseline=-0.65ex, thick, scale=1]
\draw (-0.3,1) [<-]to (0.3,0.4);
\draw (-0.3,0.4) to (-0.1,0.6);
\draw (0.1,0.8) [->] to (0.3,1);
\draw (-0.3,0.4) to (0.3,-0.2);
\draw (-0.3,-0.2) to (-0.1,0);
\draw (0.1,0.2) to (0.3,0.4);
\draw  (-0.3,1) arc (90:270:0.6);
\draw  (0.3,1) arc (90:-90:0.6);
\draw (0.5,-0.5) node {$\scriptstyle (v, V)$};
\draw (-0.5,-0.5) node {$\scriptstyle (u, U)$};
\end{tikzpicture} \right )
&=\frac{1}{v^2-v^{-2}}\left< \begin{tikzpicture}[baseline=-0.65ex, thick, scale=1]
\draw (-0.3,1) to (0.3,0.4);
\draw (-0.3,0.4) to (-0.1,0.6);
\draw (0.1,0.8) to (0.3,1);
\draw (-0.3,0.4) to (0.3,-0.2);
\draw (-0.3,-0.2) to (-0.1,0);
\draw (0.1,0.2) to (0.3,0.4);
\draw  (-0.3,1) arc (0:180:0.3);
\draw  (-0.9,1) [->-]to (-0.9, -0.2);
\draw  (-0.3,-0.2) arc (0:-180:0.3);
\draw (0.3,-0.2) to (0.3,-1);
\draw (0.3,1) [->]to (0.3,1.5);
\draw (0.8,-0.7) node {$\scriptstyle (v, V)$};
\draw (-1.4,-0.5) node {$\scriptstyle (u, U)$};
\draw [thin] (-1.4, 1.5) to (0.9, 1.5);
\draw [thin] (-1.4, 1) to (0.9, 1);
\draw [thin] (-1.4, -1) to (0.9, -1);
\draw [thin] (-1.4, -0.2) to (0.9, -0.2);
\draw [thin] (-1.4, 0.4) to (0.9, 0.4);
\end{tikzpicture}\right >=\frac{-u^{2V}v^{2U}(v^{2}-v^{-2})}{v^2-v^{-2}}=-u^{2V}v^{2U}.\\\\
 \left(\begin{tikzpicture}[baseline=-0.65ex, thick, scale=1]
\draw (-0.3,1) [<-]to (-0.1,0.8);
\draw (0.3,0.4) to (0.1,0.6);
\draw (-0.3,0.4) [->] to (0.3,1);
\draw (-0.3,0.4) to (-0.1,0.2);
\draw (0.3,-0.2) to (0.1,0);
\draw (-0.3,-0.2) to (0.3,0.4);
\draw  (-0.3,1) arc (90:270:0.6);
\draw  (0.3,1) arc (90:-90:0.6);
\draw (0.5,-0.5) node {$\scriptstyle (v, V)$};
\draw (-0.5,-0.5) node {$\scriptstyle (u, U)$};
\end{tikzpicture}\right)&=\frac{1}{v^2-v^{-2}}\left<\begin{tikzpicture}[baseline=-0.65ex, thick, scale=1]
\draw (-0.3,1) to (-0.1,0.8);
\draw (0.3,0.4) to (0.1,0.6);
\draw (-0.3,0.4)  to (0.3,1);
\draw (-0.3,0.4) to (-0.1,0.2);
\draw (0.3,-0.2) to (0.1,0);
\draw (-0.3,-0.2) to (0.3,0.4);
\draw  (-0.3,1) arc (0:180:0.3);
\draw  (-0.9,1) [->-]to (-0.9, -0.2);
\draw  (-0.3,-0.2) arc (0:-180:0.3);
\draw (0.3,-0.2) to (0.3,-1);
\draw (0.3,1) [->]to (0.3,1.5);
\draw (0.8,-0.7) node {$\scriptstyle (v, V)$};
\draw (-1.4,-0.5) node {$\scriptstyle (u, U)$};
\draw [thin] (-1.4, 1.5) to (0.9, 1.5);
\draw [thin] (-1.4, 1) to (0.9, 1);
\draw [thin] (-1.4, -1) to (0.9, -1);
\draw [thin] (-1.4, -0.2) to (0.9, -0.2);
\draw [thin] (-1.4, 0.4) to (0.9, 0.4);
\end{tikzpicture}\right>=\frac{u^{-2V}v^{-2U}(v^{2}-v^{-2})}{v^2-v^{-2}}=u^{-2V}v^{-2U}.
\end{align*}
Here $\langle D\rangle$ denotes the scalar defined the by the tangle $D$.
\end{ex}
\begin{proof}
We view the left-hand diagram in the first row as a morphism from $U(v, V)_{+}$ to itself, which is a scalar of the indentity. To determine the scalar, it is enough to calculate the image of the generator $e_0$. The diagram is divided into 4 slices, each of which contains exactly one critical point or crossing. By the Boltzmann weights in Table 1, we have the following calculation.  
\begin{eqnarray*}
e_0&\mapsto& u^{-2}e_0\otimes e_0\otimes e_0+e_1\otimes e_1\otimes e_0\\
&\mapsto& u^{-2}v^{-1+U}u^{-1+V}e_0\otimes e_0\otimes e_0+v^{1+U}u^{-1+V}e_1\otimes e_0\otimes e_1+\frac{1-v^4}{v^{1-U}u^{1-V}}e_1\otimes e_1\otimes e_0\\
&\mapsto& u^{-2}v^{-2+2U}u^{-2+2V}e_0\otimes e_0\otimes e_0+v^{2+2U}u^{-2+2V}e_1\otimes e_1\otimes e_0\\
&&+\frac{(1-v^4)(1-u^4)}{v^{2-2U}u^{2-2V}} e_1\otimes e_1\otimes e_0+\frac{1-v^4}{v^{1-U}u^{1-V}}v^{-1+U}u^{1+V}e_1\otimes e_0\otimes e_1\\
&\mapsto& [u^{-2}v^{-2+2U}u^{-2+2V}-u^{-2}v^{2+2U}u^{-2+2V}-u^{-2}\frac{(1-v^4)(1-u^4)}{v^{2-2U}u^{2-2V}}]e_0\\
&=& u^{2V}v^{2U-2}(1-v^{4})e_0=-u^{2V}v^{2U}(v^2-v^{-2})e_0.
\end{eqnarray*}
After dividing $v^2-v^{-2}$, we get the desired result. The calculation of the other case can be conducted in the same way.
\end{proof}



\begin{table}
\begin{center}
\begin{tabular} {|c|c|c|c|c|} \hline
&\begin{tikzpicture}[baseline=-0.65ex, thick]
\draw [dotted] (0, 0) [<-] to [out=270,in=180] (0.5, -0.5) to [out=0,in=270] (1,0);
\draw (1.7, -0.3) node {$(u, U)$};
\draw (0.7, -1) node {$1$};
\end{tikzpicture}&
\begin{tikzpicture}[baseline=-0.65ex, thick]
\draw  (0, 0) [<-] to [out=270,in=180] (0.5, -0.5) to [out=0,in=270] (1,0);
\draw (1.7, -0.3) node {$(u, U)$};
\draw (0.7, -1) node {$-u^{2}$};
\end{tikzpicture}&
\begin{tikzpicture}[baseline=-0.65ex, thick]
\draw [dotted] (0, 0) to [out=270,in=180] (0.5, -0.5)  [->] to [out=0,in=270] (1,0);
\draw (1.7, -0.3) node {$(u, U)$};
\draw (0.7, -1) node {$u^{-2}$};
\end{tikzpicture}
&
\begin{tikzpicture}[baseline=-0.65ex, thick]
\draw (0, 0) to [out=270,in=180] (0.5, -0.5) [->] to [out=0,in=270]  (1,0);
\draw (1.7, -0.3) node {$(u, U)$};
\draw (0.7, -1) node {$1$};
\end{tikzpicture}\\
\hline
&\begin{tikzpicture}[baseline=-0.65ex, thick]
\draw [dotted] (0, 0) [<-] to [out=90,in=180] (0.5, 0.5) to [out=0,in=90] (1,0);
\draw (1.7, 0.3) node {$(u, U)$};
\draw (0.7, -0.5) node {$1$};
\end{tikzpicture}&
\begin{tikzpicture}[baseline=-0.65ex, thick]
\draw  (0, 0) [<-] to [out=90,in=180] (0.5, 0.5) to [out=0,in=90] (1,0);
\draw (1.7, 0.3) node {$(u, U)$};
\draw (0.7, -0.5) node {$-u^{-2}$};
\end{tikzpicture}&
\begin{tikzpicture}[baseline=-0.65ex, thick]
\draw [dotted] (0, 0)  to [out=90,in=180] (0.5, 0.5) [->] to [out=0,in=90] (1,0);
\draw (1.7, 0.3) node {$(u, U)$};
\draw (0.7, -0.5) node {$u^{2}$};
\end{tikzpicture}
&
\begin{tikzpicture}[baseline=-0.65ex, thick]
\draw  (0, 0)  to [out=90,in=180] (0.5, 0.5) [->] to [out=0,in=90] (1,0);
\draw (1.7, 0.3) node {$(u, U)$};
\draw (0.7, -0.5) node {$1$};
\end{tikzpicture}\\
\hline
\hline
\vspace{-2mm}&&&&\\
     & \begin{tikzpicture}[baseline=-0.65ex, thick]
\draw [dotted] (0, 0)  to (1,1);
\draw  [dotted] (1,0)  to  (0,1);
\end{tikzpicture}
     & \begin{tikzpicture}[baseline=-0.65ex, thick]
\draw [dotted] (0, 0)  to (1,1);
\draw  (1,0)  to  (0,1);
\end{tikzpicture}
      & \begin{tikzpicture}[baseline=-0.65ex, thick]
\draw  (0, 0)  to (1,1);
\draw  [dotted] (1,0)  to  (0,1);
\end{tikzpicture}
& \begin{tikzpicture}[baseline=-0.65ex, thick]
\draw  (0, 0)  to (1,1);
\draw  (1,0)  to  (0,1);
\end{tikzpicture}
       \\ \hline 
 \vspace{-2mm}     &&&&\\
\begin{tikzpicture}[baseline=-0.65ex, thick]
\draw  (0, 0) [->]  to (1,1);
\draw  (1,0)  to  (0.6,0.4);
\draw  (0.4,0.6) [->]  to  (0,1);
\draw (1.6, 0) node {$(u, U)$};
\draw (-0.6, 0) node {$(v, V)$};
\end{tikzpicture}
 & $v^{1-U}u^{1-V}$   & $v^{-1-U}u^{1-V}$ & $v^{1-U}u^{-1-V}$ &$-v^{-1-U}u^{-1-V}$ \\ 
\hline 
 \vspace{-2mm}     &&&&\\
\begin{tikzpicture}[baseline=-0.65ex, thick]
\draw  (0.6, 0.6) [->]  to (1,1);
\draw  (1,0) [->] to  (0,1);
\draw  (0,0)  to  (0.4,0.4);
\draw (1.6, 0) node {$(u, U)$};
\draw (-0.6, 0) node {$(v, V)$};
\end{tikzpicture}
 & $v^{U-1}u^{V-1}$   & $v^{U+1}u^{V-1}$ & $v^{U-1}u^{V+1}$ &$-v^{U+1}u^{V+1}$ \\ 
\hline 
\hline
\vspace{-2mm}&&&&\\
     & \begin{tikzpicture}[baseline=-0.65ex, thick]
\draw (0, 0)  to (0.5,0.5);
\draw [dotted] (0.5, 0.5)  to (1,1);
\draw (0.5, 0.5) to (0, 1);
\draw  [dotted] (1,0)  to  (0.5,0.5);
\end{tikzpicture}
     & \begin{tikzpicture}[baseline=-0.65ex, thick]
\draw [dotted] (0, 0)  to (0.5,0.5);
\draw  (0.5, 0.5)  to (1,1);
\draw [dotted] (0.5, 0.5) to (0, 1);
\draw  (1,0)  to  (0.5,0.5);
\end{tikzpicture}
      & \begin{tikzpicture}[baseline=-0.65ex, thick]
\draw (0, 0)  to (0.5,0.5);
\draw [dotted]  (0.5, 0.5)  to (1,1);
\draw [dotted] (0.5, 0.5) to (0, 1);
\draw  (1,0)  to  (0.5,0.5);
\end{tikzpicture}
& \begin{tikzpicture}[baseline=-0.65ex, thick]
\draw  [dotted] (0, 0)  to (0.5,0.5);
\draw  (0.5, 0.5)  to (1,1);
\draw   (0.5, 0.5) to (0, 1);
\draw  [dotted] (1,0)  to  (0.5,0.5);
\end{tikzpicture}
       \\ \hline 
 \vspace{-2mm}     &&&&\\
\begin{tikzpicture}[baseline=-0.65ex, thick]
\draw  (0, 0) [->]  to (1,1);
\draw  (1,0)  to  (0.6,0.4);
\draw  (0.4,0.6) [->]  to  (0,1);
\draw (1.6, 0) node {$(u, U)$};
\draw (-0.6, 0) node {$(v, V)$};
\end{tikzpicture}
 & $0$   & $\displaystyle \frac{v^{4}-1}{v^{1+U}u^{1+V}}$ & $0$ &$0$ \\ 
 \vspace{-2mm}     &&&&\\
\hline 
\vspace{-2mm}     &&&&\\
\begin{tikzpicture}[baseline=-0.65ex, thick]
\draw  (0.6, 0.6) [->]  to (1,1);
\draw  (1,0) [->] to  (0,1);
\draw  (0,0)  to  (0.4,0.4);
\draw (1.6, 0) node {$(u, U)$};
\draw (-0.6, 0) node {$(v, V)$};
\end{tikzpicture}
 & $\displaystyle \frac{1-u^{4}}{v^{1-U}u^{1-V}}$   & $0$ & $0$ &$0$ \\ 
     &&&&\\
\hline
\end{tabular}
\vspace{3mm}
\caption{Boltzmann weights for critical points, half-twist symbols and two types of crossings from Viro's Table 3.}
\label{viro1}
\end{center}
\end{table}

Viro's functor satisfies the following relations, which will be used in Section 3. See \cite[5.2B]{MR2255851}.

\begin{lemma}[Junction relations]
\label{junction}
For $t\in G$ satisfying $t^4\neq 1$, let $\displaystyle d(t)=\frac{1}{t^2-t^{-2}}$. When $u^4v^4\neq 1$ we have
\begin{align*}
\left(\begin{tikzpicture}[baseline=-0.65ex, thick, scale=1]
\draw (0,-1) [<-] to (0,1);
\draw (1,-1) [<-] to (1,1);
\draw (-0,-1.25) node {$\scriptstyle(u, U)$};
\draw (1, -1.25) node {$\scriptstyle(v, V)$};
\end{tikzpicture}\right)=d(uv)
\left(\begin{tikzpicture}[baseline=-0.65ex, thick, scale=1]
\draw (0,-1) [<-] to  (0.5,-0.5);
\draw (0.5, -0.5)  [->]  to  (1,-1);
\draw (0.5, 0.5)    [-<-] to (0,1);
\draw (0.5,0.5) [-<-] to  (1,1);
\draw (0.5,-0.5) [-<-] to  (0.5,0.5);
\draw (0, -1.25) node {$\scriptstyle(u, U)$};
\draw (1, -1.25) node {$\scriptstyle(v, V)$};
\draw (1.5, 0) node {${ \scriptstyle (uv, U+V-1)}$};
\draw (0, 1.25) node {$\scriptstyle(u, U)$};
\draw (1, 1.25) node {$\scriptstyle(v, V)$};
\end{tikzpicture}\right)
-d(uv)
\left(\begin{tikzpicture}[baseline=-0.65ex, thick, scale=1]
\draw (0,-1) [<-] to  (0.5,-0.5);
\draw (0.5, -0.5)  [->]  to  (1,-1);
\draw (0.5, 0.5)    [-<-] to (0,1);
\draw (0.5,0.5) [-<-] to  (1,1);
\draw (0.5,-0.5) [->-] to  (0.5,0.5);
\draw (0, -1.25) node {$\scriptstyle(u, U)$};
\draw (1, -1.25) node {$\scriptstyle(v, V)$};
\draw (2, 0) node {${\scriptstyle (u^{-1}v^{-1}, -U-V-1)}$};
\draw (0, 1.25) node {$\scriptstyle(u, U)$};
\draw (1, 1.25) node {$\scriptstyle(v, V)$};
\end{tikzpicture}\right).
\end{align*}
When $u^4v^{-4}\neq 1$ we have
\begin{align*}
\left(\begin{tikzpicture}[baseline=-0.65ex, thick, scale=1]
\draw (0,-1) [<-] to (0,1);
\draw (1,-1) [->] to (1,1);
\draw (-0,-1.25) node {$\scriptstyle(u, U)$};
\draw (1, -1.25) node {$\scriptstyle(j, J)$};
\end{tikzpicture}\right)=d(uv^{-1})
\left(\begin{tikzpicture}[baseline=-0.65ex, thick, scale=1]
\draw (0,-1) [<-] to  (0.5,-0.5);
\draw (0.5, -0.5)  [-<-]  to  (1,-1);
\draw (0.5, 0.5)    [-<-] to (0,1);
\draw (0.5,0.5) [->] to  (1,1);
\draw (0.5,-0.5) [-<-] to  (0.5,0.5);
\draw (0, -1.25) node {$\scriptstyle(u, U)$};
\draw (1, -1.25) node {$\scriptstyle(v, V)$};
\draw (1.7, 0) node {${\scriptstyle (uv^{-1}, U-V+1)}$};
\draw (0, 1.25) node {$\scriptstyle(u, U)$};
\draw (1, 1.25) node {$\scriptstyle(v, V)$};
\end{tikzpicture}\right)
-d(uv^{-1})
\left(\begin{tikzpicture}[baseline=-0.65ex, thick, scale=1]
\draw (0,-1) [<-] to  (0.5,-0.5);
\draw (0.5, -0.5)  [-<-]  to  (1,-1);
\draw (0.5, 0.5)    [-<-] to (0,1);
\draw (0.5,0.5) [->] to  (1,1);
\draw (0.5,-0.5) [->-] to  (0.5,0.5);
\draw (0, -1.25) node {$\scriptstyle(u, U)$};
\draw (1, -1.25) node {$\scriptstyle(v, V)$};
\draw (1.8, 0) node {${\scriptstyle (u^{-1}v, -U+V+1)}$};
\draw (0, 1.25) node {$\scriptstyle(u, U)$};
\draw (1, 1.25) node {$\scriptstyle(v, V)$};
\end{tikzpicture}\right).
\end{align*}
\end{lemma}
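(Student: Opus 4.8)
The plan is to read each identity as an equality of morphisms in $\mathcal{M}_B$ having the \emph{same} source and target, and to prove it as such. In the first relation, reading bottom to top, the left-hand side is the identity endomorphism of the object $U(u,U)_{\epsilon}\otimes U(v,V)_{\epsilon'}$ determined by the two strand orientations, while each summand on the right-hand side is a trivalent ``fusion'' vertex followed by a ``splitting'' vertex through a single intermediate edge, colored $(uv,U+V-1)$ in the first term and $(u^{-1}v^{-1},-U-V-1)$ in the second. A direct check against the admissibility conditions \eqref{multi} shows these are exactly the two admissible intermediate colors: for the drawn orientations one gets $u\cdot v\cdot(uv)^{-1}=1$ with weight $U+V-(U+V-1)=1$, and $u\cdot v\cdot(u^{-1}v^{-1})=1$ with weight $U+V+(-U-V-1)=-1$, matching $-\prod_i\epsilon_i$ in each case. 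Note also that the hypothesis $u^4v^4\neq1$ is precisely what makes the right-hand side meaningful: it is equivalent to $(uv)^4\neq1$, which is both the condition for $d(uv)=\tfrac{1}{(uv)^2-(uv)^{-2}}$ to be defined and the admissibility requirement that the intermediate multiplicities $uv$ and $(uv)^{-1}$ avoid fourth roots of unity. So the content of the relation is a resolution of the identity into the two fusion channels of the tensor product.

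The most economical argument is representation-theoretic. The condition $u^4v^4\neq1$ is exactly the typicality condition under which $U(u,U)_{\epsilon}\otimes U(v,V)_{\epsilon'}$ is semisimple and splits, multiplicity-freely, as the direct sum of the two typical modules carried by the intermediate edges (the summands are distinct because their multiplicities $uv$ and $(uv)^{-1}$ differ exactly when $(uv)^4\neq1$). Its endomorphism algebra is then $2$-dimensional, spanned by the projectors $p_1,p_2$ onto the two summands, so the identity is automatically a linear combination of the two fuse-then-split composites $\mathrm{FS}_1,\mathrm{FS}_2$. It remains only to fix the coefficients, and this is where the factors $d$ enter as inverse ``bubble'' values. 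Since each intermediate module is irreducible, Schur's lemma gives $\mathrm{fuse}\circ\mathrm{split}=\theta_c\,\mathrm{id}$ on it, whence $\mathrm{FS}_c^2=\theta_c\,\mathrm{FS}_c$, so that $\theta_c^{-1}\mathrm{FS}_c=p_c$ is idempotent. One computes $\theta_c=t^2-t^{-2}$ for $t$ the intermediate multiplicity, i.e. $\theta_c^{-1}=d(t)$; as $d(u^{-1}v^{-1})=-d(uv)$, the decomposition $\mathrm{id}=p_1+p_2$ becomes exactly $d(uv)\mathrm{FS}_1-d(uv)\mathrm{FS}_2$, which is the asserted identity.

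To make the value $\theta_c$ and the coefficients fully explicit, and to avoid any appeal to typicality, I would instead argue entirely with matrices, just as in Example~\ref{example}. Each tensor factor is $2$-dimensional on $\{e_0,e_1\}$, so source and target are $4$-dimensional on $\{e_{\delta_1}\otimes e_{\delta_2}\}$; placing each vertex in Morse position and composing slice by slice, I would read off the $2\times4$ matrix of each fusion vertex and the $4\times2$ matrix of each splitting vertex from the trivalent-vertex Boltzmann weights of Viro's Tables~3--4 (the crossing and critical-point entries needed are already in Table~\ref{viro1}). Multiplying yields explicit $4\times4$ matrices $M_1,M_2$, and the claim reduces to the single entrywise identity $d(uv)M_1-d(uv)M_2=\mathrm{Id}_4$. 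The second relation is handled identically after replacing $v$ by $v^{-1}$, equivalently reversing the right strand (which dualizes $U(v,V)$), its genericity hypothesis becoming $u^4v^{-4}\neq1$.

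The main obstacle is bookkeeping rather than conceptual: assembling the trivalent-vertex morphisms correctly from Viro's tables, tracking the half-twist symbols, the source/sink disk orientations, and above all the Koszul signs produced whenever the fermionic generator $e_1$ is transposed past another odd generator. This super-sign is the likeliest source of error, so I would cross-check the final matrix identity against the explicit scalar evaluations already obtained in Example~\ref{example}, and confirm that the bubble value indeed equals $t^2-t^{-2}$, so that the stated coefficients $\pm d(uv)$ are forced.
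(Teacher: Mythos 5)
Your proposal is correct in substance, but note that the paper itself offers no proof of Lemma \ref{junction}: it imports the junction relations wholesale from Viro \cite[5.2B]{MR2255851} as established properties of his functor. So the comparison is really with Viro's derivation, and your first, representation-theoretic argument is essentially it: when $(uv)^4\neq 1$ the tensor product is semisimple and splits multiplicity-freely into two typical channels, the endomorphism algebra is spanned by the two projectors, Schur's lemma reduces the coefficients to inverse bubble values, and $d(t^{-1})=-d(t)$ forces the relative sign $d(uv),\,-d(uv)$. Your matrix fallback is a legitimate finite verification and matches how the present paper actually manipulates the functor elsewhere (the slice-by-slice Boltzmann-weight computations of Example \ref{example} and Lemmas \ref{up-down} and \ref{handle-slide}); what your plan buys over the paper's citation is a self-contained proof, at the cost of the bookkeeping you correctly identify.

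Two slips worth fixing, neither fatal. First, your parenthetical reason for distinctness of the two channels is wrong: read with a consistent orientation, both intermediate edges carry multiplicity $uv$ — the label $(u^{-1}v^{-1},-U-V-1)$ appears only because that edge is traversed in the reversed direction, i.e.\ it is the dual color — and the two summands are distinguished by their weights $U+V-1$ and $U+V+1$, not by $uv\neq (uv)^{-1}$; in any case $uv=(uv)^{-1}$ is equivalent to $(uv)^2=1$, not to $(uv)^4=1$. Second, the bubble value is fixed only up to the orientation and Koszul-sign conventions (compare the factor $-(t^2-t^{-2})$ arising in the closed-loop evaluations inside the proof of Lemma \ref{up-down}), so your planned cross-check of $\theta_c$ against Example \ref{example} is genuinely necessary rather than merely prudent. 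You also silently corrected the paper's typo $(j,J)$ for $(v,V)$ in the second relation, which is right, and your observation that $u^4v^{\mp4}\neq1$ is simultaneously the admissibility condition on the intermediate colors and the nonvanishing condition for $d$ is exactly the right way to see why the hypotheses are sharp.
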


\medskip

\section{Properties of the Alexander polynomial under Kirby moves}
\subsection{Cohomology classes}
We review a characterization of cohomology classes given in \cite[Sect. 2.3]{MR3286896}. Let $M$ be a $3$-manifold, let $T$ be a framed graph in $M$. Suppose $L$ is an oriented framed link in $S^3$ which is a surgery presentation for $M$. Since $T$ is disjoint from $L$, we also view $T$ as a graph in $S^3$ before the surgery. 

Now we consider diagrams of $L$ and $T$, which are still denoted by $L$ and $T$. Let $e_1, e_2, \cdots, e_r$ be the components of $L$, and $e_{r+1}, e_{r+2}, \cdots, e_{r+s}$ be the oriented edges of $T$. For two different components $e_i$ and $e_j$ in $L$ ($1 \leq i, j\leq r$), let $lk_{ij}=lk(e_i, e_j)$ denote the linking number of $e_i$ and $e_j$. Namely, it is half of the sum of signs of all the crossings between $e_i$ and $e_j$. Let $lk_{ii}=lk(e_i, e_i)$ ($1 \leq i \leq r$) be the framing of $e_i$. Namely it is the sum of signs of self-crossings of $e_i$ (since we use blackboard framing). It is well-known that $lk_{ij}$ does not depend on the diagram we choose. The matrix $(lk_{ij})_{1\leq i, j \leq r}$ is called the {\it linking matrix} of $L$. 

For a component $e_i$ of $L$ and an edge $e_j$ of $T$, we define the linking number $lk_{ij}=lk(e_i, e_j)$ to be the number of all the crossings of type \begin{tikzpicture}[baseline=-0.65ex, thick, scale=0.5]
\draw  (0, 0) [->]  to (1,1);
\draw  (1,0)  to  (0.6,0.4);
\draw  (0.4,0.6) [->]  to  (0,1);
\draw (1.5, 0) node {$e_i$};
\draw (-0.5, 0) node {$e_j$};
\end{tikzpicture} minus the number of crossings of type 
\begin{tikzpicture}[baseline=-0.65ex, thick, scale=0.5]
\draw  (0.6, 0.6) [->]  to (1,1);
\draw  (1,0) [->] to  (0,1);
\draw  (0,0)  to  (0.4,0.4);
\draw (1.5, 0) node {$e_j$};
\draw (-0.5, 0) node {$e_i$};
\end{tikzpicture} between $e_i$ and $e_j$. Note that this number depends on the diagrams of $L$ and $T$.
 
Let $M\backslash T$ be the complement of $T$ in $M$. The first homology group $H_{1}(M\backslash T, \mathbb{Z})$ has a presentation
$$H_1(M\backslash T, \mathbb{Z})=\left <\begin{array}{l|l}
  & \forall 1\leq i \leq r,\sum_{j=1}^{r+s} lk_{ij}[m_j]=0;\\
\{ [m_i]\}_{1\leq i\leq r+s} & \forall v:\text{vertex of $T$}, r_v=0;\\
&\forall 1\leq i, j \leq r+s, [m_i]+[m_j]=[m_j]+[m_i]
\end{array}
\right >,
$$
where $m_i$ is the oriented meridian of $e_i$, and for a vertex $v$ of $T$, $r_v$ is the sum of meridians of the edges entering $v$ minus the sum of meridians of the edges outgoing from $v$. Note that for each $1\leq i \leq r$, $\sum_{j=r+1}^{r+s}lk_{ij}[m_j]$ does not depend on the choice of diagram of $L$ and $T$ and is a well-defined value.

Let $G$ be an abelian group. Then the cohomology class  $$\omega\in H^1(M\backslash T, G)\cong \mathrm{Hom} (H_1(M\backslash T, \mathbb{Z}), G)$$ is uniquely determined by the images of $[m_i]'s$ under $\omega$.

\subsection{Kirby calculus}
We review basic facts about Kirby calculus, which can be found, for instance in \cite[5.1]{MR3286896}. Kirby \cite{MR467753} showed that any $3$-manifold can be obtained by doing surgeries along a framed link in $S^3$. Such a link is called the surgery presentation of the given $3$-manifold. There are two types of moves connecting surgery presentations, which are called blow up/down moves and handle-slide move. See Fig.~\ref{f3} and Fig.~\ref{f4}.

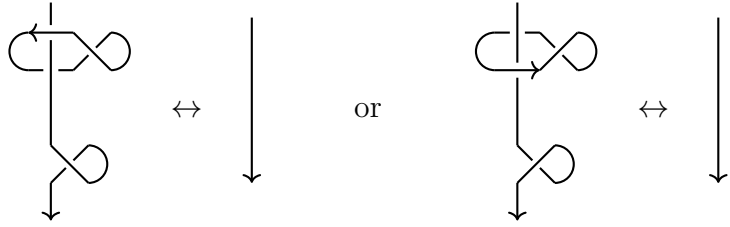
\begin{figure}
\begin{align*}
\begin{tikzpicture}[baseline=-0.65ex, thick, scale=1]
\draw (0,-0.5) to (0,0.9);
\draw (0,1.1) to (0,1.4);
\draw (-0.3,1) [<-]to (0.3,1);
\draw (-0.3,0.5) to (-0.1,0.5);
\draw (0.1,0.5) to (0.3,0.5);
\draw  (-0.3,1) arc (90:270:0.25);
\draw (0.3,1) to (0.8,0.5);
\draw (0.3,0.5) to (0.5,0.7);
\draw (0.8,1) to (0.6,0.8);
\draw (0,-0.5) to (0.5,-1);
\draw (0.8,0.5) arc (-90:90:0.25);
\draw (0,-1) to (0.2,-0.8);
\draw (0.3,-0.7) to (0.5,-0.5);
\draw (0.5,-1) arc (-90:90:0.25);
\draw (0,-1) [->]  to (0,-1.5);
\end{tikzpicture} \quad \leftrightarrow \quad \begin{tikzpicture}[baseline=-0.65ex, thick, scale=1]
\draw (0.5,-1) [<-]  to (0.5,1.2);
\end{tikzpicture}\quad\quad\quad \text{or}\quad\quad\quad
\begin{tikzpicture}[baseline=-0.65ex, thick, scale=1]
\draw (0,-0.5) to (0,0.4);
\draw (0,0.6) to (0,1.4);
\draw (-0.3,0.5) [->]to (0.3,0.5);
\draw (-0.3,1) to (-0.1,1);
\draw (0.1,1) to (0.3,1);
\draw  (-0.3,1) arc (90:270:0.25);
\draw (0.3,1) to (0.5,0.8);
\draw (0.6,0.7) to (0.8,0.5);
\draw (0.3,0.5) to (0.8,1);
\draw (0,-0.5) to (0.2,-0.7);
\draw (0.3,-0.8) to (0.5,-1);
\draw (0.8,0.5) arc (-90:90:0.25);
\draw (0,-1) to (0.5,-0.5);
\draw (0.5,-1) arc (-90:90:0.25);
\draw (0,-1) [->]  to (0,-1.5);
\end{tikzpicture}\quad \leftrightarrow \quad \begin{tikzpicture}[baseline=-0.65ex, thick, scale=1]
\draw (0.5,-1) [<-]  to (0.5,1.2);
\end{tikzpicture}
\end{align*}
\caption{Blow up/down moves}
\label{f3}
\end{figure}

\begin{figure}
\begin{eqnarray*}
\begin{tikzpicture}[baseline=-0.65ex, thick, scale=0.8]
\draw (0.5,-1.2)  to (0.5,1.4);
\draw (0.5,-1.5) node {$e_i$};
\draw [dotted](2.2,-0.8) arc (-90:90:1);
\draw (2.2,-0.8) arc (-90:-270:1);
\draw (2.2,-1.2) node {$e_j$};
\end{tikzpicture}\quad\quad \leftrightarrow \quad\quad 
\begin{tikzpicture}[baseline=-0.65ex, thick, scale=1]
\draw [dotted] (0.3, -0.8) arc (-90:90:0.55);
\draw  (0.3, -0.8) arc (-90:-270:0.55);
\draw [dotted] (0.3, -1.1) arc (-90:90:0.9);
\draw (0.3, -1.1) to [out=180,in=270] (-0.5, -0.5) to [out=180,in=0] (-1, -0.5) to [out=270,in=90] (-1, -1.5);
\draw (-1, 1) to (-1, 0) to (-0.5, 0) to [out=90,in=180] (0.3, 0.7);
\draw (-1.2, -1.25) node {$e_i$};
\draw (0.2, 0) node {$e_j$};
\end{tikzpicture}
\end{eqnarray*}
\caption{Handle-slide move of $e_i$ along $e_j$.}
\label{f4}
\end{figure}
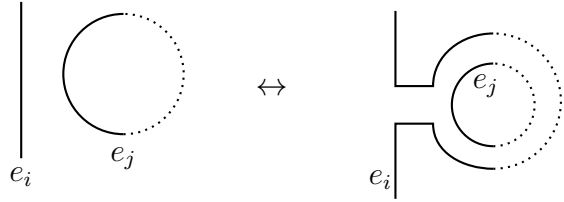

\begin{theo}[Theorem 5.2 in \cite{MR3286896}]
\label{kirby}
Let $M_1$ and $M_2$ be two $3$-manifolds and $T_1\subset M_1$ and $T_2\subset M_2$ be embedded framed graphs. Let $f:M_1\to M_2$ be an orientation preserving diffeomorphism such that $f(T_1)=T_2$. Let $L_i\subset S^3$ be a surgery presentation of $M_i$ which is disjoint from $T_i$ for $i=1, 2$. Then $f$ is isotopic to the diffeomorphisms induced by a finite sequence of moves: 
$$L_1=L^0\xrightarrow{k_1} L^1 \xrightarrow{k_2}\cdots \xrightarrow{k_r}L^{k}=L_2,$$
where each $k_i$ ($1\leq i \leq r$) is one of the following moves.
\begin{enumerate}
\item handle-slide move of a component/edge of $L^{i-1}\cup T_1$ along a component of $L^{i-1}$;
\item blow up/down move along a component/edge of $L^{i-1}\cup T_1$, where the circle component which appears or disappears during this move must be a component of the surgery presentation.
\end{enumerate}
\end{theo}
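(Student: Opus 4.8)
The plan is to reduce the statement to the classical Kirby theorem \cite{MR467753} together with its relative version for a ``spectator'' sublink, treating the framed graph $T$ as carried along but never surgered. First I would pass to the four--dimensional picture: performing integral surgery on $L_i$ exhibits $M_i$ as the boundary $\partial W_{L_i}$ of the $4$-manifold $W_{L_i}$ obtained from $B^4$ by attaching one $2$-handle along each component of $L_i$. Since $T_i$ is disjoint from $L_i$, it lies in $S^3=\partial B^4$ away from the attaching regions of the handles, so it survives as a framed graph pushed into $\partial W_{L_i}=M_i$. In this language the handle-slide move of Figure~\ref{f4} is the usual $2$-handle slide, while the blow up/down move of Figure~\ref{f3} is connected sum with a $\pm 1$-framed unknotted $2$-handle (geometrically $\pm\mathbb{CP}^2$), which leaves the boundary unchanged. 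The orientation-preserving diffeomorphism $f\colon M_1\to M_2$ with $f(T_1)=T_2$ is exactly the datum we must realize, and the Fenn--Rourke refinement of the classical theorem guarantees that the composite diffeomorphism produced by the moves is isotopic to the prescribed $f$, not merely that $M_1$ and $M_2$ are diffeomorphic.

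Next I would treat the link part $L$ by invoking the mechanism behind the classical theorem: any orientation-preserving diffeomorphism of surgered boundaries is, after stabilizing by blow-ups, induced by a sequence of $2$-handle slides, via Cerf theory applied to the handle decompositions of $W_{L_1}$ and $W_{L_2}$ (equivalently, to the $5$-dimensional cobordism between them built from $f$). The only genuinely new feature is that the entire discussion must be carried out \emph{relative} to the framed graph, following the relative framework of \cite{MR3286896}. The key local observation is standard: an isotopy of the pair $(M,T)$ that pushes an edge of $T$ across the surgery solid torus of a component $K$ of $L$ is, read back in the surgery diagram in $S^3$, precisely a handle slide of that edge over $K$, i.e.\ move~(1) applied to an edge. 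Thus an isotopy of $T$ inside $M$ decomposes into an isotopy supported in $S^3\setminus L$, which changes nothing in the presentation, composed with finitely many edge-over-component slides. The trivalent vertices of $T$ are preserved as incidence data by every diffeomorphism and by every move, so no creation or deletion of edges is ever needed; only the edges slide.

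Finally I would assemble the pieces and verify the bookkeeping of framings: the framing surface of $T$ and the framings of the components of $L$ must be tracked through the moves, since a handle slide alters framings and linking numbers by the usual linking-number formula and a blow-down of a $\pm 1$-framed unknot shifts framings accordingly. I would check that these are exactly the framing changes produced by the permitted moves, using that the blow-up/down unknots are declared to be components of the surgery presentation and never edges of $T$. The main obstacle is the relative Cerf-theoretic step: one must arrange the path of Morse functions, equivalently the sequence of births, deaths, and slides relating the two handle decompositions, so that no handle is ever attached along or cancelled against the spectator graph (hence $T$ is never surgered), so that every birth/death pair is an ordinary $\pm 1$-framed unknot belonging to $L$ (giving blow up/down moves of the permitted type), and so that the cyclic ordering and framing data at the trivalent vertices are respected throughout. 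Making this control precise, so that the abstract handle moves translate exactly into moves (1) and (2) and into isotopy of the pair $(M,T)$, is the technical heart of the argument; the remaining verifications are routine.
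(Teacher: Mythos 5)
There is nothing to compare against inside this paper: Theorem \ref{kirby} is not proved here at all, but imported verbatim as Theorem 5.2 of \cite{MR3286896} and used as a black box (the authors' own contribution begins only afterwards, with Lemmas \ref{zero}--\ref{handle-slide} and the proof of Theorem \ref{mainresult1}). So your attempt can only be measured against the proof in \cite{MR3286896}, which establishes the relative, diffeomorphism-tracking Kirby theorem by adapting the classical Kirby/Fenn--Rourke machinery (4-dimensional handle decompositions and Cerf theory, carried out relative to the spectator graph, in the spirit of Roberts' Kirby calculus for manifolds with boundary). In outline that is the same route you sketch.

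The problem is that your text is a roadmap rather than a proof, and the places you defer are exactly where the content lies. First, you attribute the realization of the specific diffeomorphism $f$ (not merely the existence of some diffeomorphism $M_1\cong M_2$) to ``the Fenn--Rourke refinement,'' but Fenn--Rourke concerns closed manifolds with no embedded graph; the relative, $f$-tracking statement in the presence of $T$ \emph{is} the theorem being proved, so invoking it here is circular. Second, your ``key local observation'' --- that an isotopy of the pair $(M,T)$ pushing an edge of $T$ through the surgery solid torus of a component $K$ reads back in $S^3$ as a handle slide of that edge over $K$ --- is correct but is asserted as standard without argument, and it is one of the two genuinely new ingredients the relative setting requires. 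Third, you yourself concede that ``the relative Cerf-theoretic step\ldots is the technical heart'': arranging the path of Morse functions so that no handle is ever attached along or cancelled against $T$, every birth/death is a $\pm 1$-framed unknot belonging to $L$ and disjoint from $T$, and all slides avoid the vertices of $T$. A complete argument would have to either carry out this control directly or, as in the literature, pass to the exterior $S^3\setminus N(T)$ and prove (or correctly quote) a relative Kirby theorem for manifolds with boundary, then translate boundary data back into the edge-slide move (1). Since none of these three steps is supplied, the proposal identifies the right strategy but does not constitute a proof of the statement.
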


In general, the composition of a handle-slide move and a blow-up move can be realized by the composition of a blow-up move and several handle-slide moves (possibly one), and the composition of a blow-down move and a handle-slide move can be realized by the composition of several handle-slide moves (possibly one) and a blow-down move. A blow-up move after a blow-down move can be done before the blow-down move without changing the result. Therefore for the sequence of moves connecting $L_1\cup \Gamma_1$ and $L_2 \cup \Gamma_2$, we can assume that all the blow-up moves are at the beginning and all the blow-down moves are at the end.

\subsection{Lemmas}
In \cite[7.5]{MR2255851}, Viro discussed how the Alexander polynomial changes when the weights change. As a special situation, we have the following lemma. For a colored framed graph $\Lambda \subset S^3$ and a knot component $K\subset \Lambda$, we define {\it the colored linking number} of $K$ with $\Lambda$ as 
$$clk(K, \Lambda):=\prod_{\text{$e$: edge of $\Lambda$}}t_e^{lk(K, e)},$$
where $lk(K, e)$ is the linking number as defined in Section 3.1, and $t_e$ is the multiplicity of $e$. Due to the admissibility condition \eqref{multi} for multiplicities, $clk(K, \Lambda)$ is well-defined.

\begin{lemma}
\label{zero}
Let $\Lambda$ be a colored framed graph in $S^3$. Let $K$ be a knot component of $\Lambda$ satisfying the condition that $clk(K, \Lambda)=1$. Then $\Delta (\Lambda)$ does not depend on the choice of the weight on $K$.
\end{lemma}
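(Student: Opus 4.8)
The plan is to compute $\Delta(\Lambda)$ directly from Viro's functor as a state sum and to isolate the dependence on the weight $N_K$ of $K$. Recall that, after cutting one edge of a Morse-position diagram of $\Lambda$, the quantity $\Delta(\Lambda)$ is the scalar produced by summing, over all states (assignments of $e_0$ or $e_1$ to the strands in each horizontal slice), the product of the Boltzmann weights of the slices, and then dividing by $t^2-t^{-2}$ for the color $(t,N)$ of the cut edge. Since this normalization involves only the multiplicity of the cut edge, it is independent of $N_K$ even if the cut is made on $K$. The first step is therefore to locate every slice whose Boltzmann weight depends on $N_K$. Inspecting Table~\ref{viro1}, the weights attached to cups and caps do not involve the second color component at all, so $N_K$ can enter only through crossings in which $K$ participates.

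The decisive observation is that, within a fixed crossing, a strand's weight always occurs as a \emph{state-independent} power of the \emph{multiplicity of the other strand}: for the first (resp.\ second) crossing type in Table~\ref{viro1}, every one of the admissible entries contains the factor $v^{-U}u^{-V}$ (resp.\ $v^{U}u^{V}$), while the remaining, state-dependent, part of the entry carries no dependence on $U$ or $V$. Consequently, at each crossing $c$ where $K$ meets an edge $e$, the contribution of $N_K$ to the Boltzmann weight is $t_e^{\kappa(c)N_K}$ with $\kappa(c)\in\{+1,-1\}$ determined solely by the crossing (its sign and the local orientations), the same for all states; being state-independent, this factor pulls out of the entire state sum. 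I would treat self-crossings of $K$ identically, noting that there both strands carry $(t_K,N_K)$, so each self-crossing contributes $t_K^{2\kappa(c)N_K}$.

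Collecting these factors yields $\Delta(\Lambda)=\bigl(\prod_{e}t_e^{\,m_e N_K}\bigr)\cdot R$, where $R$ does not depend on $N_K$ and $m_e=\sum_{c:\,K\cap e}\kappa(c)$, with the doubled contribution along $e=K$. The final step is to identify $m_e$ with $2\,lk(K,e)$: since $\kappa(c)$ is, up to a fixed global sign, the sign of the crossing $c$, summing over the crossings of $K$ with $e$ recovers twice the linking number of Section~3.1, the factor two at $e=K$ matching the unhalved framing convention there. Hence the whole $N_K$-dependent prefactor equals $clk(K,\Lambda)^{2N_K}$ (up to a global sign on the exponent), and the hypothesis $clk(K,\Lambda)=1$ forces it to be $1$, so $\Delta(\Lambda)$ is independent of the weight on $K$. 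I expect the main obstacle to be exactly this last bookkeeping: one must check that the crossing-sign reading of $\kappa(c)$ is consistent with the two linking-number conventions used to define $clk$ (including the doubling at self-crossings and the blackboard-framing convention) and confirm that no weight-dependence slips in through the half-twist symbols. Once the prefactor is pinned down as a power of $clk(K,\Lambda)$, the conclusion is immediate, and the precise value of that constant exponent is in fact irrelevant. Alternatively, one may read this off Viro's general weight-change analysis in \cite{MR2255851} and specialize to $clk(K,\Lambda)=1$.
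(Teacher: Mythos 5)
Your proposal is correct and follows essentially the same route as the paper: the paper's proof likewise compares Boltzmann weights crossing by crossing, noting that shifting the weight on $K$ by $J$ multiplies each crossing's weight by $s^{\mp J}$ ($s$ the multiplicity of the other strand, sign given by the crossing), and then collects these state-independent factors into $\prod_{e} t_e^{-2\,lk(K,e)J}=clk(K,\Lambda)^{-2J}=1$. Your extra observations (the normalization $t^2-t^{-2}$ depending only on the multiplicity of the cut edge, the doubling at self-crossings, and the remark that the exact constant exponent is irrelevant once the prefactor is a power of $clk(K,\Lambda)$) are all consistent with, and slightly more explicit than, the paper's argument, which also points to Viro's weight-change analysis as its source.
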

\begin{proof}
We consider a graph diagram of $\Lambda$, which is still denoted by $\Lambda$. Suppose $c$ is the color of $\Lambda$ and $c(K)=(t, N)$. Let $c'$ be a color which are the same as $c$ except that at the component $K$ we have $c'(K)=(t, N+J)$. A straightforward comparison of Boltzmann weights tells us the following facts. At a positive crossing, if the weight at one of the two strands changes from $N$ to $N+J$, the contribution of the Boltzmann weight makes a change of $s^{-J}$, where $s$ is the multiplicity of the other strand at that crossing. For a negative crossing, the change is $s^{J}$. Therefore
\begin{eqnarray*} 
\Delta (\Lambda; c')&=&\Delta (\Lambda; c)\prod_{\text{$e$: edge of $\Lambda$}} t_e^{-2lk(K, e)J}=\Delta (\Lambda; c)\left (\prod_{\text{$e$: edge of $\Lambda$}} t_e^{2lk(K, e)}\right )^{-J}\\&=&\Delta (\Lambda; c)1^{-J}=\Delta (\Lambda; c),
\end{eqnarray*}
where $t_e$ is the multiplicity of the edge $e$.
\end{proof}

\begin{defn}
\rm
For $(t, N)\in G\backslash \{g\in G \mid g^4=1\}\times \mathbb{Z}$, if one component of a link has {\it Kirby color} $\Omega (t, N)$, the Alexander polynomial is calculated as follows:
\begin{align*}
\left(\begin{tikzpicture}[baseline=-0.65ex, thick, scale=1]
\draw (0.5,-0.5) [<-]  to (0.5,1.2);
\draw (0.5,-0.8) node {$\Omega(t, N)$};
\end{tikzpicture}\right):=d(t)\left(\begin{tikzpicture}[baseline=-0.65ex, thick, scale=1]
\draw (0.5,-0.5) [<-]  to (0.5,1.2);
\draw (0.5,-0.8) node {$(t, N)$};
\end{tikzpicture}\right)-d(t)\left(\begin{tikzpicture}[baseline=-0.65ex, thick, scale=1]
\draw (0.5,-0.5) [->]  to (0.5,1.2);
\draw (0.5,-0.8) node {$(t^{-1}, 2-N)$};
\end{tikzpicture}\right),
\end{align*}
where $d(t)=\frac{1}{t^2-t^{-2}}$.
For a strand with Kirby color $\Omega (t, N)$, its {\it multiplicity} is defined to be $t$.
\end{defn}

It is easy to see that if a knot $K\subset S^3$ has Kirby color $\Omega (t, N)$, we have
$$\Delta (K; \Omega (t, N))=\Delta (-K; \Omega (t^{-1}, 2-N)),$$
where $-K$ is the same knot $K$ with opposite orientation.

Now, we discuss how the Alexander polynomial changes under blow-up/down moves when the circle component has a Kirby color.

\begin{lemma}
\label{up-down}
\begin{align*}
\left(\begin{tikzpicture}[baseline=-0.65ex, thick, scale=1]
\draw (0,-0.5) to (0,0.9);
\draw (0,1.1) to (0,1.4);
\draw (-0.3,1) [<-]to (0.3,1);
\draw (-0.3,0.5) to (-0.1,0.5);
\draw (0.1,0.5) to (0.3,0.5);
\draw  (-0.3,1) arc (90:270:0.25);
\draw (0.3,1) to (0.8,0.5);
\draw (0.3,0.5) to (0.5,0.7);
\draw (0.8,1) to (0.6,0.8);
\draw (0,-0.5) to (0.5,-1);
\draw (0.8,0.5) arc (-90:90:0.25);
\draw (0,-1) to (0.2,-0.8);
\draw (0.3,-0.7) to (0.5,-0.5);
\draw (0.5,-1) arc (-90:90:0.25);
\draw (0,-1) [->]  to (0,-1.5);
\draw (0.6,-1.25) node {$(t, N)$};
\draw (-0.8,0.2) node {$\Omega(t, J)$};
\end{tikzpicture}\right)=2\left(\begin{tikzpicture}[baseline=-0.65ex, thick, scale=1]
\draw (0.5,-1) [<-]  to (0.5,1.2);
\draw (0.5,-1.25) node {$(t, N)$};
\end{tikzpicture}\right),\quad
\left(\begin{tikzpicture}[baseline=-0.65ex, thick, scale=1]
\draw (0,-0.5) to (0,0.4);
\draw (0,0.6) to (0,1.4);
\draw (-0.3,0.5) [->]to (0.3,0.5);
\draw (-0.3,1) to (-0.1,1);
\draw (0.1,1) to (0.3,1);
\draw  (-0.3,1) arc (90:270:0.25);
\draw (0.3,1) to (0.5,0.8);
\draw (0.6,0.7) to (0.8,0.5);
\draw (0.3,0.5) to (0.8,1);
\draw (0,-0.5) to (0.2,-0.7);
\draw (0.3,-0.8) to (0.5,-1);
\draw (0.8,0.5) arc (-90:90:0.25);
\draw (0,-1) to (0.5,-0.5);
\draw (0.5,-1) arc (-90:90:0.25);
\draw (0,-1) [->]  to (0,-1.5);
\draw (0.6,-1.25) node {$(t, N)$};
\draw (-0.8,0.2) node {$\Omega(t, J)$};
\end{tikzpicture}\right)=-2\left(\begin{tikzpicture}[baseline=-0.65ex, thick, scale=1]
\draw (0.5,-1) [<-]  to (0.5,1.2);
\draw (0.6,-1.25) node {$(t, N)$};
\end{tikzpicture}\right).
\end{align*}
\end{lemma}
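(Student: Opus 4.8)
The plan is to prove Lemma~\ref{up-down} by reducing each blow-up/down move to a local computation involving a small unknotted circle with Kirby color $\Omega(t,J)$, using the definition of the Kirby color to split the evaluation into two terms. The left-hand diagrams in the statement depict a strand colored $(t,N)$ that passes through a small circle (a meridian-type loop), and the circle carries Kirby color $\Omega(t,J)$. By the definition of Kirby color, evaluating the diagram equals $d(t)$ times the evaluation with the circle colored $(t,J)$ and oriented one way, minus $d(t)$ times the evaluation with the circle colored $(t^{-1}, 2-J)$ and oriented the opposite way. So the first step is to write the left-hand side as this difference of two honest colored-graph evaluations.

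Next I would evaluate each of the two resulting diagrams. The key observation is that the small circle links the $(t,N)$-strand exactly once, so after the blow-up/down the circle can be isotoped to a clasp (a Hopf-link-type clasp) on the vertical strand. The two concrete evaluations in Example~\ref{example} are exactly of this form: they compute the scalar for a strand clasped by a small circle, giving $-u^{2V}v^{2U}$ in one case and $u^{-2V}v^{-2U}$ in the other. Here the clasping circle plays the role of $(u,U)$ and the through-strand plays the role of $(v,V)$; with both multiplicities equal to $t$, the colored linking number is $t^{\pm 2}$, and I expect the two clasp evaluations to produce factors of the form $t^{\pm(2J + \text{something})}$ times the evaluation of the bare $(t,N)$-strand. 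Multiplying each clasp scalar by the $d(t)$ prefactor and the sign coming from the crossing conventions, the two terms should combine. The crucial arithmetic is that the $d(t)=1/(t^2-t^{-2})$ prefactor cancels against the $(t^2-t^{-2})$ factor appearing in the clasp evaluation (just as in Example~\ref{example}, where dividing by $v^2-v^{-2}$ cleared that factor), leaving a clean scalar multiple of the bare strand.

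An alternative and cleaner route is to invoke Lemma~\ref{zero} first: the blow-up circle is an unknot, and the meridian disk it bounds meets the $(t,N)$-strand once, so the colored linking number of the circle with the rest of the graph is $t^{\pm 1}$ rather than $1$; however, after one is careful about orientations and the two summands in the Kirby color, the weight-dependence on $J$ should cancel between the two terms, so that the final answer is independent of $J$. This is consistent with the statement, where $J$ does not appear on the right-hand side. Concretely, I expect each of the two Kirby-color summands to contribute $\pm 1$ (the two contributions being $+1$ and $+1$ for the first identity, summing with the $d(t)$ factors to give $2$, and $-1$ and $-1$ for the second, giving $-2$), after the $t^{\pm 2J}$ factors from the two summands cancel against the $t^{\mp 2J}$ built into the second summand's color $(t^{-1},2-J)$.

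The main obstacle will be bookkeeping the orientations and crossing signs so that the clasp evaluations from Example~\ref{example} can be applied directly, and verifying that the $J$-dependence genuinely cancels between the two Kirby-color summands. In particular, one must check that the two terms in the Kirby color, after clasping, contribute with the correct relative sign so that they add (to give $\pm 2$) rather than cancel. I would handle this by carefully tracking the orientation-reversal in the second summand $(t^{-1}, 2-J)$, which swaps the roles of the two crossing types and hence converts a subtraction into an addition of equal scalars. Once the orientations are fixed, the remaining computation is a direct substitution of the values from Example~\ref{example} together with the cancellation of $d(t)$, and the two cases (positive versus negative blow-up, distinguished by the sign of the self-crossing of the small circle) are handled symmetrically, differing only by the overall sign that produces $+2$ versus $-2$.
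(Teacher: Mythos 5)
Your proposal follows essentially the same route as the paper's proof: expand the Kirby color into its two summands, evaluate each clasped circle via the computations of Example~\ref{example} together with the twist factor from the kink, and let the $d(t)$ prefactors cancel the $(t^{2}-t^{-2})$ factors so that the two summands add to give $\pm 2$. One small correction to your bookkeeping: the $J$-dependence cancels \emph{within} each summand separately — the full-twist factor $t^{2(N-N')}$ cancels the clasp factor $t^{2(N'-N)}$, which is why the paper can leave the circle's weight unspecified — rather than between the two Kirby-color summands.
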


\begin{proof} The equalities follow from the definition of Kirby color and the following facts.
\begin{align*}
\left(\begin{tikzpicture}[baseline=-0.65ex, thick, scale=1]
\draw (0,-0.5) to (0,0.9);
\draw (0,1.1) to (0,1.4);
\draw (-0.3,1) [<-]to (0.3,1);
\draw (-0.3,0.5) to (-0.1,0.5);
\draw (0.1,0.5) to (0.3,0.5);
\draw  (-0.3,1) arc (90:270:0.25);
\draw (0.3,1) to (0.8,0.5);
\draw (0.3,0.5) to (0.5,0.7);
\draw (0.8,1) to (0.6,0.8);
\draw (0,-0.5) to (0.5,-1);
\draw (0.8,0.5) arc (-90:90:0.25);
\draw (0,-1) to (0.2,-0.8);
\draw (0.3,-0.7) to (0.5,-0.5);
\draw (0.5,-1) arc (-90:90:0.25);
\draw (0,-1) [->]  to (0,-1.5);
\draw (0.6,-1.25) node {$(t, N)$};
\draw (-0.5,0.2) node {$t$};
\end{tikzpicture}\right)=-\left(\begin{tikzpicture}[baseline=-0.65ex, thick, scale=1]
\draw (0,-0.5) to (0,0.9);
\draw (0,1.1) to (0,1.4);
\draw (-0.3,1) [->]to (0.3,1);
\draw (-0.3,0.5) to (-0.1,0.5);
\draw (0.1,0.5) to (0.3,0.5);
\draw  (-0.3,1) arc (90:270:0.25);
\draw (0.3,1) to (0.8,0.5);
\draw (0.3,0.5) to (0.5,0.7);
\draw (0.8,1) to (0.6,0.8);
\draw (0,-0.5) to (0.5,-1);
\draw (0.8,0.5) arc (-90:90:0.25);
\draw (0,-1) to (0.2,-0.8);
\draw (0.3,-0.7) to (0.5,-0.5);
\draw (0.5,-1) arc (-90:90:0.25);
\draw (0,-1) [->]  to (0,-1.5);
\draw (0.6,-1.25) node {$(t, N)$};
\draw (-0.5,0.2) node {$t^{-1}$};
\end{tikzpicture}\right)=(t^{2}-t^{-2})
\left(\begin{tikzpicture}[baseline=-0.65ex, thick, scale=1]
\draw (0.5,-1) [<-]  to (0.5,1.2);
\draw (0.6,-1.25) node {$(t, N)$};
\end{tikzpicture}\right),\\
-\left(\begin{tikzpicture}[baseline=-0.65ex, thick, scale=1]
\draw (0,-0.5) to (0,0.4);
\draw (0,0.6) to (0,1.4);
\draw (-0.3,0.5) [->]to (0.3,0.5);
\draw (-0.3,1) to (-0.1,1);
\draw (0.1,1) to (0.3,1);
\draw  (-0.3,1) arc (90:270:0.25);
\draw (0.3,1) to (0.5,0.8);
\draw (0.6,0.7) to (0.8,0.5);
\draw (0.3,0.5) to (0.8,1);
\draw (0,-0.5) to (0.2,-0.7);
\draw (0.3,-0.8) to (0.5,-1);
\draw (0.8,0.5) arc (-90:90:0.25);
\draw (0,-1) to (0.5,-0.5);
\draw (0.5,-1) arc (-90:90:0.25);
\draw (0,-1) [->]  to (0,-1.5);
\draw (0.6,-1.25) node {$(t, N)$};
\draw (-0.5,0.2) node {$t$};
\end{tikzpicture}\right)=\left(\begin{tikzpicture}[baseline=-0.65ex, thick, scale=1]
\draw (0,-0.5) to (0,0.4);
\draw (0,0.6) to (0,1.4);
\draw (-0.3,0.5) [<-]to (0.3,0.5);
\draw (-0.3,1) to (-0.1,1);
\draw (0.1,1) to (0.3,1);
\draw  (-0.3,1) arc (90:270:0.25);
\draw (0.3,1) to (0.5,0.8);
\draw (0.6,0.7) to (0.8,0.5);
\draw (0.3,0.5) to (0.8,1);
\draw (0,-0.5) to (0.2,-0.7);
\draw (0.3,-0.8) to (0.5,-1);
\draw (0.8,0.5) arc (-90:90:0.25);
\draw (0,-1) to (0.5,-0.5);
\draw (0.5,-1) arc (-90:90:0.25);
\draw (0,-1) [->]  to (0,-1.5);
\draw (0.6,-1.25) node {$(t, N)$};
\draw (-0.5,0.2) node {$t^{-1}$};
\end{tikzpicture}\right)=(t^{2}-t^{-2})
\left(\begin{tikzpicture}[baseline=-0.65ex, thick, scale=1]
\draw (0.5,-1) [<-]  to (0.5,1.2);
\draw (0.5,-1.25) node {$(t, N)$};
\end{tikzpicture}\right).
\end{align*}
For the component where the weight is hidden, we can choose any integer as its weight. We prove one relation, and the other three can be proved in the same vein. From Table 1, we see that the contribution of a negative full-twist is $t^{2N}$ if the corresponding edge has color $(t, N)$. We have
\begin{align*}
\left(\begin{tikzpicture}[baseline=-0.65ex, thick, scale=1]
\draw (0,-0.5) to (0,0.9);
\draw (0,1.1) to (0,1.4);
\draw (-0.3,1) [->]to (0.3,1);
\draw (-0.3,0.5) to (-0.1,0.5);
\draw (0.1,0.5) to (0.3,0.5);
\draw  (-0.3,1) arc (90:270:0.25);
\draw (0.3,1) to (0.8,0.5);
\draw (0.3,0.5) to (0.5,0.7);
\draw (0.8,1) to (0.6,0.8);
\draw (0,-0.5) to (0.5,-1);
\draw (0.8,0.5) arc (-90:90:0.25);
\draw (0,-1) to (0.2,-0.8);
\draw (0.3,-0.7) to (0.5,-0.5);
\draw (0.5,-1) arc (-90:90:0.25);
\draw (0,-1) [->]  to (0,-1.5);
\draw (0.6,-1.25) node {$(t, N)$};
\draw (-1,0.2) node {$(t^{-1}, N')$};
\end{tikzpicture}\right)=t^{2(N-N')}\left(\begin{tikzpicture}[baseline=-0.65ex, thick, scale=1]
\draw (0,1.1) to (0,1.4);
\draw (-0.3,1) [->]to (0.3,1);
\draw (-0.3,0.5) to (-0.1,0.5);
\draw (0.1,0.5) to (0.3,0.5);
\draw  (-0.3,1) arc (90:270:0.25);
\draw  (0.3,1) arc (90:-90:0.25);
\draw (0,0.9) [->]  to (0,-1);
\draw (0.6,-0.8) node {$(t, N)$};
\draw (-0.8,0.2) node {$(t^{-1}, N')$};
\end{tikzpicture}\right)=t^{2(N-N')}\left(\begin{tikzpicture}[baseline=-0.65ex, thick, scale=1]
\draw (-0.3,1) [->-]to (0.3,0.4);
\draw (-0.3,0.4) to (-0.1,0.6);
\draw (0.1,0.8) to (0.3,1);
\draw (-0.3,0.4) to (0.3,-0.2);
\draw (-0.3,-0.2) to (-0.1,0);
\draw (0.1,0.2) to (0.3,0.4);
\draw  (-0.3,1) arc (90:270:0.6);
\draw (0.3,-0.2) [->]  to (0.3,-1);
\draw (0.3,1) to (0.3,1.2);
\draw (0.9,-0.8) node {$(t, N)$};
\draw (-0.6,-0.5) node {$(t^{-1}, N')$};
\end{tikzpicture}\right)\\
=-t^{2(N-N')}t^{2(N'-N)}(t^2-t^{-2})\left(\begin{tikzpicture}[baseline=-0.65ex, thick, scale=1]
\draw (0.5,-1) [<-]  to (0.5,1.2);
\draw (0.6,-1.25) node {$(t, N)$};
\end{tikzpicture}\right)=-(t^2-t^{-2})\left(\begin{tikzpicture}[baseline=-0.65ex, thick, scale=1]
\draw (0.5,-1) [<-]  to (0.5,1.2);
\draw (0.6,-1.25) node {$(t, N)$};
\end{tikzpicture}\right).
\end{align*}
The third equality follows from the calculation as we did in Example~\ref{example} (with an overall change of orientations of edges).
\end{proof}

Next, we study how the Alexander polynomial changes under a handle-slide move. We have the following lemma.

\begin{lemma}
\label{handle-slide}
Suppose $\Lambda$ is a colored framed graph, and $K$ is a knot component of $\Lambda$ with Kirby color $\Omega (s, S)$. Let $e$ be an oriented edge of $\Lambda\backslash K$ with color $(t, N)$. Let $\Lambda'$ be a graph obtained from $\Lambda$ by a handle-slide move of $e$ along $K$, and $K$ has the new Kirby color $\Lambda (ts, N+S-1)$. If $clk(K, \Lambda)=1$ and $(ts)^4\neq 1$, we have $\Delta (\Lambda)=\Delta (\Lambda')$.
\end{lemma}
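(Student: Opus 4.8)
The plan is to reduce the assertion to a local \emph{sliding identity} and to prove that identity by expanding the Kirby color on $K$ and applying the junction relations of Lemma~\ref{junction}, with Lemma~\ref{zero} used to absorb the residual weight shifts.

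First I would localize the move. A handle-slide of $e$ along $K$ changes the diagram only inside a neighborhood in which $e$ (colored $(t,N)$) acquires a band running parallel to a pushoff of $K$; outside this neighborhood $\Lambda$ and $\Lambda'$ agree. So it suffices to compare the two local pictures and to show that dragging the strand colored $(t,N)$ across the Kirby-colored component $K$ is compensated by replacing the color of $K$ by $\Omega(ts,N+S-1)$.

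The heart of the argument is the expansion $\Omega(s,S)=d(s)(s,S)-d(s)(s^{-1},2-S)$ into its two terms, whose strands carry \emph{opposite} orientations, followed by fusing each term with the parallel strand produced by the slide. When $K$ is oriented parallel to $e$ and colored $(s,S)$, the first junction relation applies and produces an intermediate edge of color $(ts,N+S-1)$ together with a partner edge of color $(t^{-1}s^{-1},-N-S-1)$, each weighted by $d(ts)$. When $K$ is oppositely oriented and colored $(s^{-1},2-S)$, the second junction relation applies; substituting $u=t$, $U=N$, $v=s^{-1}$, $V=2-S$ into the formulas $(uv^{-1},U-V+1)$ and $(u^{-1}v,-U+V+1)$ again yields an intermediate edge of color $(ts,N+S-1)$, now with partner $(t^{-1}s^{-1},3-N-S)$ and coefficient $d(ts)$. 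Thus both terms of $\Omega(s,S)$ funnel, through their respective junction relations, into precisely the colors $(ts,N+S-1)$ and $(t^{-1}s^{-1},\,\cdot\,)$ that appear in the target $\Omega(ts,N+S-1)=d(ts)(ts,N+S-1)-d(ts)(t^{-1}s^{-1},3-N-S)$.

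Finally I would reconcile weights and orientations. The partner weight $-N-S-1$ coming from the first junction relation differs by $4$ from the value $3-N-S$ prescribed by $\Omega(ts,N+S-1)$, which is exactly the partner weight produced by the second junction relation; since $clk(K,\Lambda)=1$ forces the colored linking number of the fused component to equal $1$ as well, Lemma~\ref{zero} permits this weight to be shifted freely without changing $\Delta$, and this is what identifies the fused diagram with $\Lambda'$. The hypothesis $(ts)^4\neq1$ guarantees that $ts$ is an admissible multiplicity and that $d(ts)$ is defined. I expect the main obstacle to be the geometric and orientation bookkeeping: confirming that the two oppositely oriented terms of $\Omega(s,S)$ genuinely satisfy the hypotheses of the \emph{two different} junction relations, tracking the framing change of $e$ that accounts for the $-1$ appearing in the weight $N+S-1$, and verifying that the colored linking number condition is inherited by the fused component so that Lemma~\ref{zero} can be invoked term by term.
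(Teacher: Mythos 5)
Your proposal is correct and takes essentially the same route as the paper's proof: expand $\Omega(s,S)$ by the definition of the Kirby color, fuse the slid strand with each of the two oppositely oriented terms via the two junction relations of Lemma~\ref{junction} (yielding exactly the colors $(ts,N+S-1)$, $((ts)^{-1},-N-S-1)$ and $((ts)^{-1},3-N-S)$ that you compute), reconcile the weight discrepancy of $4$ using $clk(K,\Lambda)=1$, and reassemble the result into $\Omega(ts,N+S-1)$ after the isotopy realizing the slide. The only cosmetic difference is that the paper performs the weight shift on the edge colored $(s,S)$ (the main part of $K$, which inherits $K$'s crossings) and propagates the change to the intermediate edge by the admissibility condition, invoking the \emph{proof} of Lemma~\ref{zero} rather than applying that lemma directly to the fused edge as you phrase it.
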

\begin{proof} We have
\begin{eqnarray*}
&&\left(\begin{tikzpicture}[baseline=-0.65ex, thick, scale=0.8]
\draw (0.5,-1.2) [<-]  to (0.5,1.4);
\draw (0.5,-1.5) node {$\scriptstyle (t, N)$};
\draw (0.2,1.2) node {$\scriptstyle e$};
\draw [dotted](2.2,-0.8) arc (-90:90:1);
\draw (2.2,-0.8) [<-]arc (-90:-270:1);
\draw (2.2,-1.2) node {$\scriptstyle \Omega(s, S)$};
\draw (1.5,1.2) node {$\scriptstyle K$};
\end{tikzpicture}\right)=d(s)\left(\begin{tikzpicture}[baseline=-0.65ex, thick, scale=0.8]
\draw (0.5,-1.2) [<-]  to (0.5,1.4);
\draw (0.5,-1.5) node {$\scriptstyle (t, N)$};
\draw [dotted](2.2,-0.8) arc (-90:90:1);
\draw (2.2,-0.8) [<-]arc (-90:-270:1);
\draw (2.2,-1.2) node {$\scriptstyle (s, S)$};
\end{tikzpicture}\right)-d(s)\left(\begin{tikzpicture}[baseline=-0.65ex, thick, scale=0.8]
\draw (0.5,-1.2) [<-]  to (0.5,1.4);
\draw (0.5,-1.5) node {$\scriptstyle (t, N)$};
\draw [dotted](2.2,-0.8) arc (-90:90:1);
\draw (2.2,-0.8) [->]arc (-90:-270:1);
\draw (2.2,-1.2) node {$\scriptstyle (s^{-1}, 2-S)$};
\end{tikzpicture}\right)\\
&=&d(s)d(ts)\left(\begin{tikzpicture}[baseline=-0.65ex, thick, scale=1]
\draw (0,-1) [<-] to  (0.5,-0.5);
\draw (0.5, -0.5)  [->]  to  (1,-1);
\draw (0.5, 0.5)    [-<-] to (0,1);
\draw (0.5,0.5) [-<-] to  (1,1);
\draw (0.5,-0.5) [-<-] to  (0.5,0.5);
\draw (0, -1.25) node {$\scriptstyle (t, N)$};
\draw (1.5, 0) node {${ \scriptstyle (ts, N+S-1)}$};
\draw (0, 1.25) node {$\scriptstyle (t, N)$};
\draw (1, 1.25) node {$\scriptstyle (s, S)$};
\draw (1,1)  to  (1.5,1);
\draw (1,-1)  to  (1.5,-1);
\draw [dotted](1.5,-1) arc (-90:90:1);
\end{tikzpicture}\right)-d(s)d(ts)\left(\begin{tikzpicture}[baseline=-0.65ex, thick, scale=1]
\draw (0,-1) [<-] to  (0.5,-0.5);
\draw (0.5, -0.5)  [->]  to  (1,-1);
\draw (0.5, 0.5)    [-<-] to (0,1);
\draw (0.5,0.5) [-<-] to  (1,1);
\draw (0.5,-0.5) [->-] to  (0.5,0.5);
\draw (0, -1.25) node {$\scriptstyle (t, N)$};
\draw (1.8, 0) node {${ \scriptstyle ((ts)^{-1}, -N-S-1)}$};
\draw (0, 1.25) node {$\scriptstyle (t, N)$};
\draw (1, 1.25) node {$\scriptstyle (s, S)$};
\draw (1,1)  to  (1.5,1);
\draw (1,-1)  to  (1.5,-1);
\draw [dotted](1.5,-1) arc (-90:90:1);
\end{tikzpicture}\right)\\
&&-d(s)d(ts)\left(\begin{tikzpicture}[baseline=-0.65ex, thick, scale=1]
\draw (0,-1) [<-] to  (0.5,-0.5);
\draw (0.5, -0.5)  [-<-]  to  (1,-1);
\draw (0.5, 0.5)    [-<-] to (0,1);
\draw (0.5,0.5) [->] to  (1,1);
\draw (0.5,-0.5) [-<-] to  (0.5,0.5);
\draw (0, -1.25) node {$\scriptstyle (t, N)$};
\draw (1.5, 0) node {${ \scriptstyle (ts, N+S-1)}$};
\draw (0, 1.25) node {$\scriptstyle (t, N)$};
\draw (1.3, 1.25) node {$\scriptstyle (s^{-1}, 2-S)$};
\draw (1,1)  to  (1.5,1);
\draw (1,-1)  to  (1.5,-1);
\draw [dotted](1.5,-1) arc (-90:90:1);
\end{tikzpicture}\right)+d(s)d(ts)\left(\begin{tikzpicture}[baseline=-0.65ex, thick, scale=1]
\draw (0,-1) [<-] to  (0.5,-0.5);
\draw (0.5, -0.5)  [-<-]  to  (1,-1);
\draw (0.5, 0.5)    [-<-] to (0,1);
\draw (0.5,0.5) [->] to  (1,1);
\draw (0.5,-0.5) [->-] to  (0.5,0.5);
\draw (0, -1.25) node {$\scriptstyle (t, N)$};
\draw (1.7, 0) node {${ \scriptstyle ((ts)^{-1}, 3-N-S)}$};
\draw (0, 1.25) node {$\scriptstyle (t, N)$};
\draw (1.3, 1.25) node {$\scriptstyle (s^{-1}, 2-S)$};
\draw (1,1)  to  (1.5,1);
\draw (1,-1)  to  (1.5,-1);
\draw [dotted](1.5,-1) arc (-90:90:1);
\end{tikzpicture}\right)\\
&=&d(s)d(ts)\left(\begin{tikzpicture}[baseline=-0.65ex, thick, scale=1]
\draw (0,-1) [<-] to  (0.5,-0.5);
\draw (0.5, -0.5)  [->]  to  (1,-1);
\draw (0.5, 0.5)    [-<-] to (0,1);
\draw (0.5,0.5) [-<-] to  (1,1);
\draw (0.5,-0.5) [-<-] to  (0.5,0.5);
\draw (0, -1.25) node {$\scriptstyle (t, N)$};
\draw (1.5, 0) node {${ \scriptstyle (ts, N+S-1)}$};
\draw (0, 1.25) node {$\scriptstyle (t, N)$};
\draw (1, 1.25) node {$\scriptstyle (s, S)$};
\draw (1,1)  to  (1.5,1);
\draw (1,-1)  to  (1.5,-1);
\draw [dotted](1.5,-1) arc (-90:90:1);
\end{tikzpicture}\right)-d(s)d(ts)\left(\begin{tikzpicture}[baseline=-0.65ex, thick, scale=1]
\draw (0,-1) [<-] to  (0.5,-0.5);
\draw (0.5, -0.5)  [->]  to  (1,-1);
\draw (0.5, 0.5)    [-<-] to (0,1);
\draw (0.5,0.5) [-<-] to  (1,1);
\draw (0.5,-0.5) [->-] to  (0.5,0.5);
\draw (0, -1.25) node {$\scriptstyle (t, N)$};
\draw (1.7, 0) node {${ \scriptstyle ((ts)^{-1}, 3-N-S)}$};
\draw (0, 1.25) node {$\scriptstyle (t, N)$};
\draw (1.2, 1.2) node {$\scriptstyle (s, S-4)$};
\draw (1,1)  to  (1.5,1);
\draw (1,-1)  to  (1.5,-1);
\draw [dotted](1.5,-1) arc (-90:90:1);
\end{tikzpicture}\right)\\
&&-d(s)d(ts)\left(\begin{tikzpicture}[baseline=-0.65ex, thick, scale=1]
\draw (0,-1) [<-] to  (0.5,-0.5);
\draw (0.5, -0.5)  [-<-]  to  (1,-1);
\draw (0.5, 0.5)    [-<-] to (0,1);
\draw (0.5,0.5) [->] to  (1,1);
\draw (0.5,-0.5) [-<-] to  (0.5,0.5);
\draw (0, -1.25) node {$\scriptstyle (t, N)$};
\draw (1.5, 0) node {${ \scriptstyle (ts, N+S-1)}$};
\draw (0, 1.25) node {$\scriptstyle (t, N)$};
\draw (1.3, 1.25) node {$\scriptstyle (s^{-1}, 2-S)$};
\draw (1,1)  to  (1.5,1);
\draw (1,-1)  to  (1.5,-1);
\draw [dotted](1.5,-1) arc (-90:90:1);
\end{tikzpicture}\right)+d(s)d(ts)\left(\begin{tikzpicture}[baseline=-0.65ex, thick, scale=1]
\draw (0,-1) [<-] to  (0.5,-0.5);
\draw (0.5, -0.5)  [-<-]  to  (1,-1);
\draw (0.5, 0.5)    [-<-] to (0,1);
\draw (0.5,0.5) [->] to  (1,1);
\draw (0.5,-0.5) [->-] to  (0.5,0.5);
\draw (0, -1.25) node {$\scriptstyle (t, N)$};
\draw (1.7, 0) node {${ \scriptstyle ((ts)^{-1}, 3-N-S)}$};
\draw (0, 1.25) node {$\scriptstyle (t, N)$};
\draw (1.2, 1.25) node {$\scriptstyle (s^{-1}, 2-S)$};
\draw (1,1)  to  (1.5,1);
\draw (1,-1)  to  (1.5,-1);
\draw [dotted](1.5,-1) arc (-90:90:1);
\end{tikzpicture}\right)\\
&=&d(s)d(ts)\left(\begin{tikzpicture}[baseline=-0.65ex, thick, scale=1]
\draw (0,0)  [->-]to  (0,-0.5);
\draw (0,0) [-<-]to (0.3, 0.3);
\draw (0,0) arc (0:180:0.3);
\draw (0,-0.5) to (0.3, -0.8);
\draw (0,-0.5) arc (90:270:0.3);
\draw [dotted] (0.3, -0.8) arc (-90:90:0.55);
\draw [dotted] (0.3, -1.1) arc (-90:90:0.9);
\draw (0, -1.1) to (0.3, -1.1);
\draw (-0.6,0) to (-0.6, -0.5);
\draw (-0.6, -0.5)[->] to (-1, -1);
\draw (0.3,0.7) [<-]to (-0.6, 0.7);
\draw (-0.6, 0.7) to (-0.9, 1);
\draw (-1, -1.25) node {${\scriptstyle (t, N)}$};
\draw (-0.3, 0.9) node {${\scriptstyle (t, N)}$};
\draw (0.35, -0.2) node {${\scriptstyle (s, S)}$};
\draw (1, -1.3) node {${ \scriptstyle (ts, N+S-1)}$};
\draw [thin](1.2, -1.1) to (0.8, -0.6);
\end{tikzpicture}\right)-d(s)d(ts) \left(\begin{tikzpicture}[baseline=-0.65ex, thick, scale=1]
\draw (0,0)  [->-]to  (0,-0.5);
\draw (0,0) [->]to (0.3, 0.3);
\draw (0,0) arc (0:180:0.3);
\draw (0,-0.5) to (0.3, -0.8);
\draw (0,-0.5) arc (90:270:0.3);
\draw [dotted] (0.3, -0.8) arc (-90:90:0.55);
\draw [dotted] (0.3, -1.1) arc (-90:90:0.9);
\draw (0, -1.1) to (0.3, -1.1);
\draw (-0.6,0) to (-0.6, -0.5);
\draw (-0.6, -0.5)[->] to (-1, -1);
\draw (0.3,0.7) [<-]to (-0.6, 0.7);
\draw (-0.6, 0.7) to (-0.9, 1);
\draw (-1, -1.25) node {${\scriptstyle (t, N)}$};
\draw (-0.3, 0.9) node {${\scriptstyle (t, N)}$};
\draw (0.55, -0.2) node {${\scriptstyle (s, S-4)}$};
\draw (1, -1.3) node {${ \scriptstyle ((ts)^{-1}, 3-N-S)}$};
\draw [thin](1.2, -1.1) to (0.8, -0.6);
\end{tikzpicture}\right)\\
&&-d(s)d(ts)\left(\begin{tikzpicture}[baseline=-0.65ex, thick, scale=1]
\draw (0,0)  [-<-]to  (0,-0.5);
\draw (0,0) [-<-]to (0.3, 0.3);
\draw (0,0) arc (0:180:0.3);
\draw (0,-0.5) to (0.3, -0.8);
\draw (0,-0.5) arc (90:270:0.3);
\draw [dotted] (0.3, -0.8) arc (-90:90:0.55);
\draw [dotted] (0.3, -1.1) arc (-90:90:0.9);
\draw (0, -1.1) to (0.3, -1.1);
\draw (-0.6,0) to (-0.6, -0.5);
\draw (-0.6, -0.5)[->] to (-1, -1);
\draw (0.3,0.7) [<-]to (-0.6, 0.7);
\draw (-0.6, 0.7) to (-0.9, 1);
\draw (-1, -1.25) node {${\scriptstyle (t, N)}$};
\draw (-0.3, 0.9) node {${\scriptstyle (t, N)}$};
\draw (0.8, -0.2) node {${\scriptstyle (s^{-1}, 2-S)}$};
\draw (1, -1.3) node {${ \scriptstyle (ts, N+S-1)}$};
\draw [thin](1.2, -1.1) to (0.8, -0.6);
\end{tikzpicture}\right)+d(s)d(ts) \left(\begin{tikzpicture}[baseline=-0.65ex, thick, scale=1]
\draw (0,0)  [-<-]to  (0,-0.5);
\draw (0,0) [->]to (0.3, 0.3);
\draw (0,0) arc (0:180:0.3);
\draw (0,-0.5) to (0.3, -0.8);
\draw (0,-0.5) arc (90:270:0.3);
\draw [dotted] (0.3, -0.8) arc (-90:90:0.55);
\draw [dotted] (0.3, -1.1) arc (-90:90:0.9);
\draw (0, -1.1) to (0.3, -1.1);
\draw (-0.6,0) to (-0.6, -0.5);
\draw (-0.6, -0.5)[->] to (-1, -1);
\draw (0.3,0.7) [<-]to (-0.6, 0.7);
\draw (-0.6, 0.7) to (-0.9, 1);
\draw (-1, -1.25) node {${\scriptstyle (t, N)}$};
\draw (-0.3, 0.9) node {${\scriptstyle (t, N)}$};
\draw (0.8, -0.2) node {${\scriptstyle (s^{-1}, 2-S)}$};
\draw (1, -1.3) node {${ \scriptstyle ((ts)^{-1}, 3-N-S)}$};
\draw [thin](1.2, -1.1) to (0.8, -0.6);
\end{tikzpicture}\right)\\
&=&d(ts)\left [d(s) 
\left(\begin{tikzpicture}[baseline=-0.65ex, thick, scale=1]
\draw (0,0)  [->-]to  (0,-0.5);
\draw (0,0) [-<-]to (0.3, 0.3);
\draw (0,0) arc (0:180:0.3);
\draw (0,-0.5) to (0.3, -0.8);
\draw (0,-0.5) arc (90:270:0.3);
\draw [dotted] (0.3, -0.8) arc (-90:90:0.55);
\draw [dotted] (0.3, -1.1) arc (-90:90:0.9);
\draw (0, -1.1) to (0.3, -1.1);
\draw (-0.6,0) to (-0.6, -0.5);
\draw (-0.6, -0.5)[->] to (-1, -1);
\draw (0.3,0.7) [<-]to (-0.6, 0.7);
\draw (-0.6, 0.7) to (-0.9, 1);
\draw (-1, -1.25) node {${\scriptstyle (t, N)}$};
\draw (-0.3, 0.9) node {${\scriptstyle (t, N)}$};
\draw (0.35, -0.2) node {${\scriptstyle (s, S)}$};
\draw (1, -1.3) node {${ \scriptstyle (ts, N+S-1)}$};
\draw [thin](1.2, -1.1) to (0.8, -0.6);
\end{tikzpicture}\right)-d(s)\left(\begin{tikzpicture}[baseline=-0.65ex, thick, scale=1]
\draw (0,0)  [-<-]to  (0,-0.5);
\draw (0,0) [-<-]to (0.3, 0.3);
\draw (0,0) arc (0:180:0.3);
\draw (0,-0.5) to (0.3, -0.8);
\draw (0,-0.5) arc (90:270:0.3);
\draw [dotted] (0.3, -0.8) arc (-90:90:0.55);
\draw [dotted] (0.3, -1.1) arc (-90:90:0.9);
\draw (0, -1.1) to (0.3, -1.1);
\draw (-0.6,0) to (-0.6, -0.5);
\draw (-0.6, -0.5)[->] to (-1, -1);
\draw (0.3,0.7) [<-]to (-0.6, 0.7);
\draw (-0.6, 0.7) to (-0.9, 1);
\draw (-1, -1.25) node {${\scriptstyle (t, N)}$};
\draw (-0.3, 0.9) node {${\scriptstyle (t, N)}$};
\draw (0.8, -0.2) node {${\scriptstyle (s^{-1}, 2-S)}$};
\draw (1, -1.3) node {${ \scriptstyle (ts, N+S-1)}$};
\draw [thin](1.2, -1.1) to (0.8, -0.6);
\end{tikzpicture}\right)
\right]\\
&&-d(ts) \left[d(s)
\left(\begin{tikzpicture}[baseline=-0.65ex, thick, scale=1]
\draw (0,0)  [->-]to  (0,-0.5);
\draw (0,0) [->]to (0.3, 0.3);
\draw (0,0) arc (0:180:0.3);
\draw (0,-0.5) to (0.3, -0.8);
\draw (0,-0.5) arc (90:270:0.3);
\draw [dotted] (0.3, -0.8) arc (-90:90:0.55);
\draw [dotted] (0.3, -1.1) arc (-90:90:0.9);
\draw (0, -1.1) to (0.3, -1.1);
\draw (-0.6,0) to (-0.6, -0.5);
\draw (-0.6, -0.5)[->] to (-1, -1);
\draw (0.3,0.7) [<-]to (-0.6, 0.7);
\draw (-0.6, 0.7) to (-0.9, 1);
\draw (-1, -1.25) node {${\scriptstyle (t, N)}$};
\draw (-0.3, 0.9) node {${\scriptstyle (t, N)}$};
\draw (0.55, -0.2) node {${\scriptstyle (s, S-4)}$};
\draw (1, -1.3) node {${ \scriptstyle ((ts)^{-1}, 3-N-S)}$};
\draw [thin](1.2, -1.1) to (0.8, -0.6);
\end{tikzpicture}\right)-d(s)\left(\begin{tikzpicture}[baseline=-0.65ex, thick, scale=1]
\draw (0,0)  [-<-]to  (0,-0.5);
\draw (0,0) [->]to (0.3, 0.3);
\draw (0,0) arc (0:180:0.3);
\draw (0,-0.5) to (0.3, -0.8);
\draw (0,-0.5) arc (90:270:0.3);
\draw [dotted] (0.3, -0.8) arc (-90:90:0.55);
\draw [dotted] (0.3, -1.1) arc (-90:90:0.9);
\draw (0, -1.1) to (0.3, -1.1);
\draw (-0.6,0) to (-0.6, -0.5);
\draw (-0.6, -0.5)[->] to (-1, -1);
\draw (0.3,0.7) [<-]to (-0.6, 0.7);
\draw (-0.6, 0.7) to (-0.9, 1);
\draw (-1, -1.25) node {${\scriptstyle (t, N)}$};
\draw (-0.3, 0.9) node {${\scriptstyle (t, N)}$};
\draw (0.75, -0.2) node {${\scriptstyle (s^{-1}, 2-S)}$};
\draw (1, -1.3) node {${ \scriptstyle ((ts)^{-1}, 3-N-S)}$};
\draw [thin](1.2, -1.1) to (0.8, -0.6);
\end{tikzpicture}\right)
\right]\\
&=&d(ts)\left(\begin{tikzpicture}[baseline=-0.65ex, thick, scale=1]
\draw [dotted] (0.3, -0.8) arc (-90:90:0.55);
\draw [dotted] (0.3, -1.1) arc (-90:90:0.9);
\draw [->] (0, -1.1) to [out=180,in=270]  (-0.3,-0.2) to [out=90,in=0] (-0.7,0.3) to [out=180,in=90] (-1.1, -0.2) to [out=270,in=45] (-1.4, -1.1);
\draw [-<-](0.3, -0.8) to [out=180, in=270] (0, -0.25) to [out=90, in=180] (0.3, 0.3);
\draw (0, -1.1) to (0.3, -1.1);
\draw (0.3,0.7) [<-]to (-0.6, 0.7);
\draw (-0.6, 0.7) to (-0.9, 1);
\draw (-1, -1.25) node {${\scriptstyle (t, N)}$};
\draw (-0.3, 0.9) node {${\scriptstyle (t, N)}$};
\draw (1, -1.3) node {${ \scriptstyle (ts, N+S-1)}$};
\draw [thin](1.2, -1.1) to (0.8, -0.6);
\end{tikzpicture}\right)-d(ts)\left(\begin{tikzpicture}[baseline=-0.65ex, thick, scale=1]
\draw [dotted] (0.3, -0.8) arc (-90:90:0.55);
\draw [dotted] (0.3, -1.1) arc (-90:90:0.9);
\draw [->] (0, -1.1) to [out=180,in=270]  (-0.3,-0.2) to [out=90,in=0] (-0.7,0.3) to [out=180,in=90] (-1.1, -0.2) to [out=270,in=45] (-1.4, -1.1);
\draw [->-] (0.3, -0.8) to [out=180, in=270] (0, -0.25) to [out=90, in=180] (0.3, 0.3);
\draw (0, -1.1) to (0.3, -1.1);
\draw (0.3,0.7) [<-]to (-0.6, 0.7);
\draw (-0.6, 0.7) to (-0.9, 1);
\draw (-1, -1.25) node {${\scriptstyle (t, N)}$};
\draw (-0.3, 0.9) node {${\scriptstyle (t, N)}$};
\draw (1, -1.3) node {${ \scriptstyle ((ts)^{-1}, 3-N-S)}$};
\draw [thin](1.2, -1.1) to (0.8, -0.6);
\end{tikzpicture}\right)\\
&=&\left(\begin{tikzpicture}[baseline=-0.65ex, thick, scale=1]
\draw [dotted] (0.3, -0.8) arc (-90:90:0.55);
\draw [dotted] (0.3, -1.1) arc (-90:90:0.9);
\draw [->] (0, -1.1) to (-1.4, -1.1);
\draw [-<-](0.3, -0.8) to [out=180, in=270] (0, -0.25) to [out=90, in=180] (0.3, 0.3);
\draw (0, -1.1) to (0.3, -1.1);
\draw (0.3,0.7) [<-]to (-0.6, 0.7);
\draw (-0.6, 0.7) to (-0.9, 1);
\draw (-0.3, 0.9) node {${\scriptstyle (t, N)}$};
\draw (-1, -0.1) node {${ \scriptstyle \Omega(ts, N+S-1)}$};
\end{tikzpicture}\right).
\end{eqnarray*}
The second equality follows from Lemma \ref{junction}, the junction relations. To obtain the third equality, we need the following fact:
\begin{eqnarray*}
\left(\begin{tikzpicture}[baseline=-0.65ex, thick, scale=1]
\draw (0,-1) [<-] to  (0.5,-0.5);
\draw (0.5, -0.5)  [->]  to  (1,-1);
\draw (0.5, 0.5)    [-<-] to (0,1);
\draw (0.5,0.5) [-<-] to  (1,1);
\draw (0.5,-0.5) [->-] to  (0.5,0.5);
\draw (0, -1.25) node {$\scriptstyle (t, N)$};
\draw (1.8, 0) node {${ \scriptstyle ((ts)^{-1}, -N-S-1)}$};
\draw (0, 1.25) node {$\scriptstyle (t, N)$};
\draw (1.1, 1.25) node {$\scriptstyle (s, S)$};
\draw (1,1)  to  (1.5,1);
\draw (1,-1)  to  (1.5,-1);
\draw [dotted](1.5,-1) arc (-90:90:1);
\end{tikzpicture}\right)
=\left(\begin{tikzpicture}[baseline=-0.65ex, thick, scale=1]
\draw (0,-1) [<-] to  (0.5,-0.5);
\draw (0.5, -0.5)  [->]  to  (1,-1);
\draw (0.5, 0.5)    [-<-] to (0,1);
\draw (0.5,0.5) [-<-] to  (1,1);
\draw (0.5,-0.5) [->-] to  (0.5,0.5);
\draw (0, -1.25) node {$\scriptstyle (t, N)$};
\draw (1.7, 0) node {${ \scriptstyle ((ts)^{-1}, 3-N-S)}$};
\draw (0, 1.25) node {$\scriptstyle (t, N)$};
\draw (1.1, 1.25) node {$\scriptstyle (s, S-4)$};
\draw (1,1)  to  (1.5,1);
\draw (1,-1)  to  (1.5,-1);
\draw [dotted](1.5,-1) arc (-90:90:1);
\end{tikzpicture}\right),
\end{eqnarray*} 
which holds because of the assumption that $clk(K, \Lambda)=1$. The edge colored by $(s, S)$ was the main part of $K$ before junction, so it has the same crossings with $\Lambda$ as $K$. Following the proof of Lemma \ref{zero}, we see that the Alexander polynomial is invariant if we change the color $(s, S)$ to $(s, S-4)$. By admissibility condition, the color $((ts)^{-1}, -N-S-1)$ should be changed to $((ts)^{-1}, 3-N-S)$. Note that the Boltzmann weights around a vertex do not depend on weights. 
The forth equality holds because the each diagram after the equality are obtained from a previous diagram by sliding of the edge with color $(t, T)$ along the edge coming from $K$, which is an isotopy of the graph. The sixth equality follows again from the junction relations.
\end{proof}

\medskip

\section{Invariant for 3-manifolds}
In this section, we consider a $1$-palette for which $G$ is a finitely generated abelian group containing at least one $\mathbb{Z}$ summand and satisfying $t^4=1 \iff t=1$. Namely $G$ contains $\mathbb{Z}$ but no $\mathbb{Z}/2\mathbb{Z}$ as a subgroup. It is not hard to find such $1$-palettes as we can see in the following examples.

\begin{ex}
\label{palette}
The $1$-palettes defined by the following data meet our requirements.
\begin{enumerate}
\item Let $B=\mathbb{Q}(t)$, the field of rational functions of $t$, and let $G=\mathbb{Z}\langle t \rangle$, the cyclic group generated by $t$.
\item Let $\xi_l$ be the $l$-th primitive root of unity for a prime number $l\geq 3$. Let $B=\mathbb{Q}(\pi, \xi_l)$, the extension field of $\mathbb{Q}$ generated by $\pi$ and $\xi_l$. Let $G=\mathbb{Z}\langle \pi, \xi_l \rangle$, the abelian group generated by $\pi$ and $\xi_l$. 
\end{enumerate}
\end{ex}

Let $M$ be a 3-manifold, and let $\Gamma$ be a colored framed graph in $M$ colored by a $1$-palette $(B, G)$ where $G$ contains $\mathbb{Z}$ but no $\mathbb{Z}/2\mathbb{Z}$ as a subgroup. Consider a cohomology class $\omega: H_{1}(M\backslash \Gamma, \mathbb{Z})\to G$. We say that $(M, \Gamma, \omega)$ is a {\it compatible triple} if for each edge $e$ of $\Gamma$, the multiplicity of $e$ is equal to $\omega([m_e])$, where $m_e$ is the meridian of $e$. Let $L$ be a surgery presentation of $M$. We say that $L$ is {\it computable} for a compatible triple $(M, \Gamma, \omega)$ if $L\cup \Gamma\neq\emptyset$ and $\omega ([m])\neq 1\in G$ for any meridian $m$ of $L$. We show the existence of the computable surgery presentations.

\begin{lemma}
For a compatible triple $(M, \Gamma, \omega)$ over the $1$-palette $(B, G)$ where $G$ contains $\mathbb{Z}$ but no $\mathbb{Z}/2\mathbb{Z}$ as a subgroup, if $\omega$ is non-trivial, then there exists a computable surgery presentation of $(M, \Gamma, \omega)$.
\end{lemma}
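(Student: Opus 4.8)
The plan is to begin with an arbitrary surgery presentation and then repair it by Kirby moves until every meridian acquires a non-trivial value under $\omega$. First I would use Kirby's theorem to fix some surgery presentation $L_{0}$ of $M$ disjoint from $\Gamma$, and abbreviate $g_{e}:=\omega([m_{e}])\in G$ for the $\omega$-value of the meridian of a strand $e$, where $e$ runs over the components of $L_{0}$ and the edges of $\Gamma$. Call a strand \emph{good} if $g_{e}\neq 1$ and \emph{bad} otherwise. Note that every edge of $\Gamma$ is automatically good: by the compatibility of $(M,\Gamma,\omega)$ its multiplicity equals $g_{e}$, and multiplicities are taken in $G\setminus\{g\in G\mid g^{4}=1\}$, so in particular $g_{e}\neq 1$. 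The goal is to reach a presentation in which no surgery component is bad.

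The two facts I would use describe how $\omega$ of a meridian behaves under the two Kirby moves, both read directly from the presentation of $H_{1}(M\setminus\Gamma)$ recalled in Section~3.1. For a blow-up, let $U$ be a $\pm 1$-framed unknot realized as a small meridian circle of a good strand $e$, so that $U$ is disjoint from $\Gamma$, links $e$ once and links nothing else. The relation attached to $U$ reads $lk(U,U)[m_{U}]+lk(U,e)[m_{e}]=0$, whence $[m_{U}]=\pm[m_{e}]$ and $\omega([m_{U}])=g_{e}^{\pm 1}\neq 1$; thus the blow-up creates a good surgery component while leaving $M$ unchanged. For a handle slide of a surgery component $K_{1}$ over a surgery component $K_{2}$, expressing the new meridian generators in the old presentation replaces $[m_{K_{2}}]$ by $[m_{K_{2}}]\mp[m_{K_{1}}]$ and fixes every other meridian class; hence only the $\omega$-value of the component slid \emph{over} changes, by $g_{K_{2}}\mapsto g_{K_{2}}\,g_{K_{1}}^{\mp 1}$.

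With these in hand the repair goes in two steps. First I would guarantee the presence of a good surgery component. If $L_{0}$ already has one, there is nothing to do. Otherwise $g_{e}=1$ for every component of $L_{0}$; since the meridians of $L_{0}$ together with those of the edges of $\Gamma$ generate $H_{1}(M\setminus\Gamma)$ and $\omega$ is non-trivial, some edge of $\Gamma$ must be good, so $\Gamma\neq\emptyset$, and blowing up a $\pm1$-framed meridian of that edge produces a good surgery component by the first fact. Second, fix a good surgery component $e_{a}$. For each bad surgery component $e_{i}$ I would perform a single handle slide of $e_{a}$ over $e_{i}$; by the second fact this sends $g_{i}=1\mapsto g_{a}^{\mp 1}\neq 1$ while leaving $g_{a}$ and all remaining $\omega$-values untouched, so $e_{i}$ becomes good and the previously repaired components stay good. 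Carrying this out once for every bad component yields a surgery presentation $L$ of $M$, disjoint from $\Gamma$, all of whose components are good.

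Finally I would check that $L$ is computable. By construction $\omega([m])\neq 1$ for every meridian $m$ of $L$. For $L\cup\Gamma\neq\emptyset$: if $\Gamma\neq\emptyset$ this is immediate, while if $\Gamma=\emptyset$ then non-triviality of $\omega$ forces $H_{1}(M)\neq 0$, so $M\neq S^{3}$ and every surgery presentation, in particular $L$, is non-empty. I expect the only delicate point to be the bookkeeping in the second fact, namely that sliding $e_{a}$ over $e_{i}$ alters the meridian of $e_{i}$ rather than that of $e_{a}$, so that a single good component $e_{a}$ can be reused to repair all bad components in turn without ever spoiling its own goodness; the degenerate cases $L_{0}=\emptyset$ and ``all components bad'' must also be tracked, but both are absorbed into the blow-up step.
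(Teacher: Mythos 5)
Your proposal is correct and follows essentially the same route as the paper: locate a strand with non-trivial $\omega$-value (which exists by non-triviality, and is produced as a surgery component by a blow-up when necessary), then repeatedly handle-slide that good component along each bad component, which multiplies the bad component's meridian value by the good one's and thus makes it non-trivial while the good component is unaffected. Your version merely adds explicit homological bookkeeping for the two moves, the observation that edges of $\Gamma$ are automatically good by admissibility, and the check that $L\cup\Gamma\neq\emptyset$, all consistent with the paper's argument.
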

\begin{proof}
Choose a surgery presentation of $M$, which we call $L$. Recall that $H_{1}(M\backslash \Gamma, \mathbb{Z})$ is generated by the meridians of $L\cup \Gamma$. Since $\omega$ is non-trivial, there is an edge or component $e$ of $L\cup \Gamma$ for which $\omega ([m_e])\neq 1$, where $m_e$ is the meridian of $e$. We do a blow-up move on the edge $e$ to create a component $K$. Then $\omega([m_K])=\omega ([m_e])\neq 1$, and $K\cup L$ is a new surgery presentation of $M$, where $m_K$ is the meridian of $K$.
If $K\cup L$ is not computable, we can slide $K$ along those components of $L$ whose meridians are mapped to $1$ under $w$ to get a computable surgery presentation. Precisely, if $L_0$ is a component of $L$ with $\omega ([m_{L_0}])=1$, we slide $K$ along $L_0$ to get a new surgery presentation. For this new presentation, $\omega ([m_{L_0}])=\omega([m_K])\neq 1$, where $m_{L_0}$ is the meridian of $L_0$. 
\end{proof}

Now we are ready to prove our main result. The proof is essentially the same as the proof of Theorem 4.7 in \cite{MR3286896}. Since our situation is concrete, we state the proof for the completeness of the paper.

\begin{proof}[Proof of Theorem \ref{mainresult1}]
Let $L$ and $L'$ be two computable surgery presentations of $(M, \Gamma, \omega)$. By Theorem \ref{kirby}, there is a sequence of handle-slide moves, blow-up/down moves connecting $L\cup \Gamma$ and $L'\cup \Gamma$ and the induced diffeomorphism $f: M\to M$ satisfies $f(\Gamma)=\Gamma$ and $f^{*}(\omega)=\omega$. We want to show that $\frac{\Delta (L\cup \Gamma)}{2^{r(L)}(-1)^{\sigma_+(L)}}=\frac{\Delta (L'\cup \Gamma)}{2^{r(L')}(-1)^{\sigma_+(L')}}$.

We can assume that all the blow-up moves are at the beginning and all the blow-down moves are at the end. Namely it is a sequence as follows:
$$L=L_0\to L_1\to \cdots \to L_k \to L_{k+1}\to \cdots \to L_{k+l}\to L_{k+l+1}\to \cdots \to L_{k+l+m}=L',$$
where $L_0$ and $L_{k}$ are connected by blow-up moves, $L_{k+l}$ and $L_{k+l+m}$ are connected by blow-down moves, while $L_{k}$ and $L_{k+l}$ are connected by handle-slide moves.

If $k\geq 1$, $L_1\cup \Gamma$ is obtained from $L_0\cup \Gamma$ by a blow-up move of a component/edge of $L_0\cup \Gamma$, which is still computable since the circle component created by the blow-up move has the same multiplicity as that of the edge where the move is done. By Lemma~\ref{up-down}, we have $\frac{\Delta (L_0\cup \Gamma)}{2^{r(L_0)}(-1)^{\sigma_+(L_0)}}=\frac{\Delta (L_1\cup \Gamma)}{2^{r(L_1)}(-1)^{\sigma_+(L_1)}}$, since the change of numerator is annihilated by the change of denominator. Subsequently, one can show that 
$\frac{\Delta (L_0\cup \Gamma)}{2^{r(L_0)}(-1)^{\sigma_+(L_0)}}=\frac{\Delta (L_k\cup \Gamma)}{2^{r(L_k)}(-1)^{\sigma_+(L_k)}}$.

Since $L'\cup \Gamma$ is obtained from $L_{k+l}\cup \Gamma$ by blow-down moves, so conversely $L_{k+l}\cup \Gamma$ is obtained from $L'\cup \Gamma$ by blow-up moves. As we discussion above, in this case we have $\frac{\Delta (L'\cup \Gamma)}{2^{r(L')}(-1)^{\sigma_+(L')}}=\frac{\Delta (L_{k+l}\cup \Gamma)}{2^{r(L_{k+l})}(-1)^{\sigma_+(L_{k+l})}}$.

Now it suffices to show that $\frac{\Delta (L_k\cup \Gamma)}{2^{r(L_k)}(-1)^{\sigma_+(L_k)}}=\frac{\Delta (L_{k+l}\cup \Gamma)}{2^{r(L_{k+l})}(-1)^{\sigma_+(L_{k+l})}}$, where $L_k\cup \Gamma$ and $L_{k+l}\cup \Gamma$ are computable and are connected by handle-slide moves. More precisely, let
\begin{equation*}
\label{sequence}
L_k \xrightarrow{s_1} L_{k+1}\xrightarrow{s_2} \cdots \xrightarrow{s_l} L_{k+l} \tag{H1}
\end{equation*}
be such a sequence of handle-slide moves.

Suppose $L_{k+1}$ is computable, which is obtained from $L_k$ by a handle-slide move of $e\subset L_k\cup \Gamma$ along a component $K\subset L_k$. Note that 
\begin{eqnarray*}
\displaystyle clk(K, L_k\cup \Gamma)&=&\prod_{\text{$e$: edge of $L_k\cup \Gamma$}} t_e^{lk(K, e)}=\prod_{\text{$e$: edge of $L_k\cup \Gamma$}} \omega ([m_e])^{lk(K, e)}\\&=&\omega\left (\sum_{\text{$e$: edge of $L_k\cup \Gamma$}} lk(K, e) [m_e]\right )=\omega (0)=1,
\end{eqnarray*}
where the forth equality follows from the presentation of $H_1(M\backslash \Gamma, \mathbb{Z})$. By Lemma~\ref{handle-slide} $\Delta (L_k\cup \Gamma)=\Delta (L_{k+1}\cup \Gamma)$, and thus $\frac{\Delta (L_k\cup \Gamma)}{2^{r(L_k)}(-1)^{\sigma_+(L_k)}}=\frac{\Delta (L_{k+1}\cup \Gamma)}{2^{r(L_{k+1})}(-1)^{\sigma_+(L_{k+1})}}$, since the component number and the eigenvalues of the linking matrix do not change under a handle-slide move.
If the intermediate presentations between $L_k$ and $L_{k+l}$ are all computable, we have $\frac{\Delta (L_k\cup \Gamma)}{2^{r(L_k)}(-1)^{\sigma_+(L_k)}}=\frac{\Delta (L_{k+l}\cup \Gamma)}{2^{r(L_{k+l})}(-1)^{\sigma_+(L_{k+l})}}$.

Now we consider the case that some surgery presentations between $L_k$ and $L_{k+l}$ are not computable. Namely some knot components have multiplicity $1$. We separate the discussion to the cases that $\Gamma\neq \emptyset$ and $\Gamma= \emptyset$ (and thus $L_k\neq \emptyset$ since $L_k\cup \Gamma\neq \emptyset$).

Suppose $\Gamma\neq \emptyset$. We choose an edge $e\subset \Gamma$ with color $(\beta, N)$ and let $\tilde{\Gamma}=\Gamma\cup \{m_e\}$, where $m_e$ is the meridian of $e$ with color $(\alpha, 0)$. The cohomology class $\omega: H_1(M\backslash \Gamma; \mathbb{Z})\to G$ uniquely determines an element $\tilde{\omega}: H_1(M\backslash \tilde{\Gamma}; \mathbb{Z})\to G$ by requiring that $\tilde{\omega}$ sends the meridian of $m_e$ to $\alpha$. We want to show that by choosing $\alpha$ sufficiently generic, $L_k\cup \tilde{\Gamma}$ and $L_{k+l}\cup \tilde{\Gamma}$ can be connected by computable surgery presentations.

Suppose $L_{k+1}$ is obtained from $L_k$ by doing handle-slide move of a component/edge $e_1\subset L_k\cup \Gamma$ along a component $K_1$ of $L_k$. If $L_{k+1}$ is not computable, which means that the new $K_1$ in $L_{k+1}$ has multiplicity $\mathrm{mul}(e_1)\mathrm{mul}(K_1)=1\in G$, we do the following moves. We first slide $m_e$ along $K_1$, which changes the multiplicity of $K_1$ to $\alpha \mathrm{mul}(K_1)$, and then perform the handle-slide $s_1$, which further changes the multiplicity of $K_1$ to $\alpha\mathrm{mul}(e_1)\mathrm{mul}(K_1)$. We want to choose $\alpha$ so that both $\alpha \mathrm{mul}(K_1)$ and $\alpha\mathrm{mul}(e_1)\mathrm{mul}(K_1)$ are not $1$.

For the rest of moves $s_2, \cdots, s_l$, each time a component with multiplicity $1$ is created, we either consider a slide of $m_e$ as above along the component or reselect $\alpha$. We need to add more conditions to $\alpha$ so that all the handle-slide moves create computable surgery presentations. The conditions can be summarized as follows. 

{\bf Condition 1}: There is a finite set $\{x_i\}_{i\in I}\subset G$ and a finite set $J\subset {\mathbb{Z}}$ which only depends on the sequence \eqref{sequence}. We want to find an $\alpha$ so that $\alpha^{n} x_i\neq 1$ for all $i\in I$ and $n\in J$.
 
After the last handle-slide move, we get a computable presentation $L_{k+l}$ for $(M, \tilde{\Gamma}, \tilde{\omega})$. However, $m_e$ could be linked with $L_{k+l}$, and thus the multiplicities of $L_{k+l}$ might be different from the original multiplicities of $L_{k+l}$ in the sequence \eqref{sequence}. Since $m_e$ is isotopic to the meridian of the edge $e\subset \Gamma$, there is an isotopy of $m_e$ in $M$ which brings it back to the small meridian around $e$. This isotopy can be realized by a sequence of handle-slide moves
\begin{equation*}
\label{sequence2}
L_{k+l}\cup \tilde{\Gamma} \xrightarrow{h_1} L_{k+1}\cup \tilde{\Gamma}\xrightarrow{h_2} \cdots \xrightarrow{h_p} L_{k+l}\cup \tilde{\Gamma} \tag{H2}
\end{equation*}
The link $L_{k+l}$ after $h_p$ has the same multiplicities as the one in (\ref{sequence}), and we suppose the component $K_i$ of $L_{k+l}$ has multiplicity $\mathrm{mul}(K_i)$ after $h_p$. During each $h_j$, we slide $m_e$ along a component of $L_{k+l}$. So we see that the multiplicity of $K_i$ before the step $h_j$ has multiplicity of the form $\alpha^{m_{ij}}\mathrm{mul}(K_i)$ where $m_{ij}\in \mathbb{Z}$. In order to make all the surgery presentations in \eqref{sequence2} be computable, we add the condition that $\alpha^{m_{ij}}\mathrm{mul}(K_i)\neq 1$ for all $i, j$. Note that $m_{ij}$ only depends on the sequence \eqref{sequence2}. This condition can be summarized as follows.

{\bf Condition 2}: For a finite set $\{y_i\}_{i\in I'}\subset G$ and a finite set $J'\subset {\mathbb{Z}}$ which only depends on the sequence \eqref{sequence2}, we want to find an $\alpha$ so that $\alpha^{m}y_i\neq 1$ for $m\in J'$ and $i\in I'$. 

Since $G$ contains a $\mathbb{Z}$ summand, it is easy to see the existence of $\alpha\in G$ satisfying Conditions 1 and 2.

By Lemma \ref{handle-slide}, we have $\frac{\Delta (L_k\cup \tilde{\Gamma})}{2^{r(L_k)}(-1)^{\sigma_+(L_k)}}=\frac{\Delta (L_{k+l}\cup \tilde{\Gamma})}{2^{r(L_{k+l})}(-1)^{\sigma_+(L_{k+l})}}$. Let $\langle H\rangle=\alpha^{-2N}(\beta^2-\beta^{-2})$. By Example \ref{example}, we have $\Delta (L_k\cup \tilde{\Gamma})=\langle H\rangle \Delta (L_k\cup \Gamma)$ and $\Delta (L_{k+l}\cup \tilde{\Gamma})=\langle H\rangle \Delta (L_{k+l}\cup \Gamma)$. Therefore finally we have $\frac{\Delta (L_k\cup \Gamma)}{2^{r(L_k)}(-1)^{\sigma_+(L_k)}}=\frac{\Delta (L_{k+l}\cup \Gamma)}{2^{r(L_{k+l})}(-1)^{\sigma_+(L_{k+l})}}$.

Now we consider the case that $\Gamma=\emptyset$, which implies $L_k\neq \emptyset$ since $L_k\cup \Gamma\neq \emptyset$. Choose a component $K$ of $L_k$ with Kirby color $\Omega (\alpha, 1)$. We apply a positive and a negative blow-up of $K$ to create two new components $m_+$ and $m_{-}$, the Kirby colors of which are also $\Omega (\alpha, 1)$. The framing of $K$ is unchanged. Let $\tilde{\Gamma}=m_+\cup m_-$ and regard it as a graph in $M$. Then $\tilde{\omega}: H_1(M, \mathbb{Z}) \to G$ determines a cohomology class $\tilde{\omega}: H_1(M\backslash \tilde{\Gamma}, \mathbb{Z}) \to G$ which sends the meridians of $m_+$ and $m_-$ to $\alpha$.

Then $L_k$ is a computable presentation for $(M, \tilde{\Gamma}, \tilde{\omega})$. After performing the handle-slide moves in \eqref{sequence}, we get $L_{k+l}$, which is still a computable presentation for $(M, \tilde{\Gamma}, \tilde{\omega})$ since the handle-slide moves do not involve $m_+$ and $m_-$. Since $\tilde{\Gamma} \neq \emptyset$, as we proved above we have
$\frac{\Delta (L_k\cup \tilde{\Gamma})}{2^{r(L_k)}(-1)^{\sigma_+(L_k)}}=\frac{\Delta (L_{k+l}\cup \tilde{\Gamma})}{2^{r(L_{k+l})}(-1)^{\sigma_+(L_{k+l})}}$. On the other hand, by Lemma \ref{up-down}, we have $\Delta (L_k\cup \tilde{\Gamma})=-4\Delta (L_k)$. For $L_{k+l}$, $m_+$ and $m_-$ can be linked with several strands of $L_{k+l}$, as shown in the following figure.
\begin{align*}
\begin{tikzpicture}[baseline=-0.65ex, thick, scale=1]
\draw (0,-0.4) to (0,0.9);
\draw (0,-0.6) to (0,-1);
\draw (0,1.1) to (0,1.4);
\draw (-0.4,-0.4) to (-0.4,0.9);
\draw (-0.4,-0.6) to (-0.4,-1);
\draw (-0.4,1.1) to (-0.4,1.4);
\draw (-1.3,-0.4) to (-1.3,0.9);
\draw (-1.3,-0.6) to (-1.3,-1);
\draw (-1.3,1.1) to (-1.3,1.4);
\draw (-0.8, 0.8) node {$\cdots$};
\draw (-0.8, -0.2) node {$\cdots$};
\draw (-1.6,1) [<-]to (0.3,1);
\draw (-1.6,0.5) to (-1.4,0.5);
\draw (-1.2,0.5) to (-0.5,0.5);
\draw (-0.3,0.5) to (-0.1,0.5);
\draw (0.1,0.5) to (0.3,0.5);
\draw  (-1.6,1) arc (90:270:0.25);
\draw (0.3,1) to (0.8,0.5);
\draw (0.3,0.5) to (0.5,0.7);
\draw (0.8,1) to (0.6,0.8);
\draw (-1.6,-0.5) [->]to (0.3,-0.5);
\draw (-1.6,0) to (-1.4,0);
\draw (-1.2,0) to (-0.5,0);
\draw (-0.3,0) to (-0.1,0);
\draw (0.1,0) to (0.3,0);
\draw (0.1,0) to (0.3,0);
\draw  (-1.6,0) arc (90:270:0.25);
\draw (0.3,0) to (0.5,-0.2);
\draw (0.6,-0.3) to (0.8,-0.5);
\draw (0.3,-0.5) to (0.8,0);
\draw (0.8,-0.5) arc (-90:90:0.25);
\draw (0.8,0.5) arc (-90:90:0.25);
\draw (1.5, -0.3) node {$m_{+}$};
\draw (1.5, 0.7) node {$m_{-}$};
\end{tikzpicture}
\end{align*}

To remove $m_+$ and $m_-$, we can first do several handle-slide moves along $m_+$ and $m_-$ to decrease the number of strands linked with $m_+$ and $m_-$, and then do one negative blow-up move, one positive blow-up move to remove $m_+$ and $m_-$. During this procedure, we can choose strands properly so that each step we get a computable presentation. By Lemmas \ref{up-down} and \ref{handle-slide}, we see that $\Delta (L_{k+l}\cup \tilde{\Gamma})=-4\Delta (L_{k+l})$ as well. As a result, we have $\frac{\Delta (L_k)}{2^{r(L_k)}(-1)^{\sigma_+(L_k)}}=\frac{\Delta (L_{k+l})}{2^{r(L_{k+l})}(-1)^{\sigma_+(L_{k+l})}}$.
\end{proof}

\section{Examples and calculations}
\subsection{A general formula for a class of lens spaces}
In this section, we compute $\Delta (L(mn-1, n), \omega):=\Delta (L(mn-1, n), \emptyset, \omega)$ for the lens space $L(mn-1, n)$ and $\Gamma=\emptyset$. We use the surgery presentation 
$L=\begin{tikzpicture}[baseline=-0.65ex, thick, scale=1]
\draw (-0.45,1) node {$\scriptstyle m$};
\draw (-0.3,1.1) to (-0.6,1.1);
\draw (-0.3,0.89) to (-0.3,1.11);
\draw (-0.3,0.90) to (-0.6,0.90);
\draw (-0.6,0.89) to (-0.6,1.11);
\draw (-0.3,1) [<-]to (0.3,0.4);
\draw (-0.3,0.4) to (-0.1,0.6);
\draw (0.1,0.8) [->] to (0.3,1);
\draw (-0.3,0.4) to (0.25,-0.09);
%
\draw (0.45,1) node {$\scriptstyle n$};
\draw (0.6,1.1) to (0.3,1.1);
\draw (0.6,0.89) to (0.6,1.11);
\draw (0.6,0.90) to (0.3,0.90);
\draw (0.3,0.89) to (0.3,1.11);
\draw (0.1,0.2) to (0.3,0.4);
\draw  (-0.6,1) arc (90:315:0.6);
\draw  (0.6,1) arc (90:-140:0.6);
\end{tikzpicture}$ for $L(mn-1, n)$, where $m>0$ or $n>0$ inside a square represents the number of positive full twists.
For a cohomology class
\begin{align}\label{eq:Lens}
\omega : H_1(M, \mathbb{Z})=\left < [m_1], [m_2]  \middle| \left(\begin{matrix} m & -1 \\ -1 & n \end{matrix} \right) \left(\begin{matrix} [m_1] \\ [m_2] \end{matrix} \right) =0 \right > \to G,
\end{align}
where $m_1$ (resp. $m_2$) is the meridian of the left-hand (resp. right-hand) slide component of $L$.
Let $u=\omega([m_1])$ and $v=\omega([m_2])$.
In the following calculations, a diagram inside round brackets represents the Alexander polynomial of the diagram. We have
\begin{align*}
&\Delta (L(mn-1, n), \omega)= \frac{\Delta (L)}{2^{r}(-1)^{\sigma_+ (L)}} \\
&= \frac{1}{2^{2}(-1)^{\sigma_+(L)}} \left(
\begin{tikzpicture}[baseline=-0.65ex, thick, scale=1]
\draw (-0.45,1) node {$\scriptstyle m$};
\draw (-0.3,1.1) to (-0.6,1.1);
\draw (-0.3,0.89) to (-0.3,1.11);
\draw (-0.3,0.90) to (-0.6,0.90);
\draw (-0.6,0.89) to (-0.6,1.11);
\draw (-0.3,1) [<-]to (0.3,0.4);
\draw (-0.3,0.4) to (-0.1,0.6);
\draw (0.1,0.8) [->] to (0.3,1);
\draw (-0.3,0.4) to (0.25,-0.09);
\draw (0.45,1) node {$\scriptstyle n$};
\draw (0.6,1.1) to (0.3,1.1);
\draw (0.6,0.89) to (0.6,1.11);
\draw (0.6,0.90) to (0.3,0.90);
\draw (0.3,0.89) to (0.3,1.11);
\draw (0.1,0.2) to (0.3,0.4);
\draw  (-0.6,1) arc (90:315:0.6);
\draw  (0.6,1) arc (90:-140:0.6);
\draw (0.5,-0.5) node {$\scriptstyle \Omega(v, 1)$};
\draw (-0.5,-0.5) node {$\scriptstyle \Omega(u, 1)$};
\end{tikzpicture}
\right)\\
&= \frac{d(u)d(v)}{4(-1)^{\sigma_+(L)}} \left(
\begin{tikzpicture}[baseline=-0.01ex, thick, scale=1]
\draw (-0.45,1) node {$\scriptstyle m$};
\draw (-0.3,1.1) to (-0.6,1.1);
\draw (-0.3,0.89) to (-0.3,1.11);
\draw (-0.3,0.90) to (-0.6,0.90);
\draw (-0.6,0.89) to (-0.6,1.11);
\draw (-0.3,1) [<-]to (0.3,0.4);
\draw (-0.3,0.4) to (-0.1,0.6);
\draw (0.1,0.8) [->] to (0.3,1);
\draw (-0.3,0.4) to (0.25,-0.09);
\draw (0.45,1) node {$\scriptstyle n$};
\draw (0.6,1.1) to (0.3,1.1);
\draw (0.6,0.89) to (0.6,1.11);
\draw (0.6,0.90) to (0.3,0.90);
\draw (0.3,0.89) to (0.3,1.11);
\draw (0.1,0.2) to (0.3,0.4);
\draw  (-0.6,1) arc (90:315:0.6);
\draw  (0.6,1) arc (90:-140:0.6);
\draw (0.5,-0.5) node {$\scriptstyle (v, 1)$};
\draw (-0.5,-0.5) node {$\scriptstyle (u, 1)$};
\end{tikzpicture}
-
\begin{tikzpicture}[baseline=-0.01ex, thick, scale=1]
\draw (-0.45,1) node {$\scriptstyle m$};
\draw (-0.3,1.1) to (-0.6,1.1);
\draw (-0.3,0.89) to (-0.3,1.11);
\draw (-0.3,0.90) to (-0.6,0.90);
\draw (-0.6,0.89) to (-0.6,1.11);
\draw (-0.3,1) [->]to (0.3,0.4);
\draw (-0.3,0.4) to (-0.1,0.6);
\draw (0.1,0.8) [->] to (0.3,1);
\draw (-0.3,0.4) to (0.25,-0.09);
\draw (0.45,1) node {$\scriptstyle n$};
\draw (0.6,1.1) to (0.3,1.1);
\draw (0.6,0.89) to (0.6,1.11);
\draw (0.6,0.90) to (0.3,0.90);
\draw (0.3,0.89) to (0.3,1.11);
\draw (0.1,0.2) to (0.3,0.4);
\draw  (-0.6,1) arc (90:315:0.6);
\draw  (0.6,1) arc (90:-140:0.6);
\draw (0.5,-0.5) node {$\scriptstyle (v, 1)$};
\draw (-0.5,-0.5) node {$\scriptstyle (u^{-1}, 1)$};
\end{tikzpicture}
-
\begin{tikzpicture}[baseline=-0.01ex, thick, scale=1]
\draw (-0.45,1) node {$\scriptstyle m$};
\draw (-0.3,1.1) to (-0.6,1.1);
\draw (-0.3,0.89) to (-0.3,1.11);
\draw (-0.3,0.90) to (-0.6,0.90);
\draw (-0.6,0.89) to (-0.6,1.11);
\draw (-0.3,1) [<-]to (0.3,0.4);
\draw (-0.3,0.4) to (-0.1,0.6);
\draw (0.1,0.8) [<-] to (0.3,1);
\draw (-0.3,0.4) to (0.25,-0.09);
\draw (0.45,1) node {$\scriptstyle n$};
\draw (0.6,1.1) to (0.3,1.1);
\draw (0.6,0.89) to (0.6,1.11);
\draw (0.6,0.90) to (0.3,0.90);
\draw (0.3,0.89) to (0.3,1.11);
\draw (0.1,0.2) to (0.3,0.4);
\draw  (-0.6,1) arc (90:315:0.6);
\draw  (0.6,1) arc (90:-140:0.6);
\draw (0.5,-0.5) node {$\scriptstyle (v^{-1}, 1)$};
\draw (-0.5,-0.5) node {$\scriptstyle (u, 1)$};
\end{tikzpicture}
+
\begin{tikzpicture}[baseline=-0.01ex, thick, scale=1]
\draw (-0.45,1) node {$\scriptstyle m$};
\draw (-0.3,1.1) to (-0.6,1.1);
\draw (-0.3,0.89) to (-0.3,1.11);
\draw (-0.3,0.90) to (-0.6,0.90);
\draw (-0.6,0.89) to (-0.6,1.11);
\draw (-0.3,1) [->]to (0.3,0.4);
\draw (-0.3,0.4) to (-0.1,0.6);
\draw (0.1,0.8) [<-] to (0.3,1);
\draw (-0.3,0.4) to (0.25,-0.09);
\draw (0.45,1) node {$\scriptstyle n$};
\draw (0.6,1.1) to (0.3,1.1);
\draw (0.6,0.89) to (0.6,1.11);
\draw (0.6,0.90) to (0.3,0.90);
\draw (0.3,0.89) to (0.3,1.11);
\draw (0.1,0.2) to (0.3,0.4);
\draw  (-0.6,1) arc (90:315:0.6);
\draw  (0.6,1) arc (90:-140:0.6);
\draw (0.5,-0.5) node {$\scriptstyle (v^{-1}, 1)$};
\draw (-0.5,-0.5) node {$\scriptstyle (u^{-1}, 1)$};
\end{tikzpicture}
\right)\\
&= \frac{d(u)d(v)}{4(-1)^{\sigma_+(L)}} {\Big[} u^{-2m} v^{-2n} 
\left(\begin{tikzpicture}[baseline=+1ex, thick, scale=1]
\draw (-0.3,1) [<-]to (0.3,0.4);
\draw (-0.3,0.4) to (-0.1,0.6);
\draw (0.1,0.8) [->] to (0.3,1);
\draw (-0.3,0.4) to (0.3,-0.2);
\draw (-0.3,-0.2) to (-0.1,0);
\draw (0.1,0.2) to (0.3,0.4);
\draw  (-0.3,1) arc (90:270:0.6);
\draw  (0.3,1) arc (90:-90:0.6);
\draw (0.5,-0.5) node {$\scriptstyle (v, 1)$};
\draw (-0.5,-0.5) node {$\scriptstyle (u, 1)$};
\end{tikzpicture}\right) 
- u^{2m} v^{-2n} 
\left(\begin{tikzpicture}[baseline=1ex, thick, scale=1]
\draw (-0.3,1) [->]to (0.3,0.4);
\draw (-0.3,0.4) to (-0.1,0.6);
\draw (0.1,0.8) [->] to (0.3,1);
\draw (-0.3,0.4) to (0.3,-0.2);
\draw (-0.3,-0.2) to (-0.1,0);
\draw (0.1,0.2) to (0.3,0.4);
\draw  (-0.3,1) arc (90:270:0.6);
\draw  (0.3,1) arc (90:-90:0.6);
\draw (0.5,-0.5) node {$\scriptstyle (v, 1)$};
\draw (-0.5,-0.5) node {$\scriptstyle (u^{-1}, 1)$};
\end{tikzpicture}\right) \\
&\qquad\qquad\qquad\quad
-u^{-2m} v^{2n}
\left(\begin{tikzpicture}[baseline=1ex, thick, scale=1]
\draw (-0.3,1) [<-]to (0.3,0.4);
\draw (-0.3,0.4) to (-0.1,0.6);
\draw (0.1,0.8) [<-] to (0.3,1);
\draw (-0.3,0.4) to (0.3,-0.2);
\draw (-0.3,-0.2) to (-0.1,0);
\draw (0.1,0.2) to (0.3,0.4);
\draw  (-0.3,1) arc (90:270:0.6);
\draw  (0.3,1) arc (90:-90:0.6);
\draw (0.5,-0.5) node {$\scriptstyle (v^{-1}, 1)$};
\draw (-0.5,-0.5) node {$\scriptstyle (u, 1)$};
\end{tikzpicture}\right)
+ u^{2m} v^{2n}
\left(\begin{tikzpicture}[baseline=1ex, thick, scale=1]
\draw (-0.3,1) [->]to (0.3,0.4);
\draw (-0.3,0.4) to (-0.1,0.6);
\draw (0.1,0.8) [<-] to (0.3,1);
\draw (-0.3,0.4) to (0.3,-0.2);
\draw (-0.3,-0.2) to (-0.1,0);
\draw (0.1,0.2) to (0.3,0.4);
\draw  (-0.3,1) arc (90:270:0.6);
\draw  (0.3,1) arc (90:-90:0.6);
\draw (0.5,-0.5) node {$\scriptstyle (v^{-1}, 1)$};
\draw (-0.5,-0.5) node {$\scriptstyle (u^{-1}, 1)$};
\end{tikzpicture}\right){\Big]}\\
&= -\frac{d(u)d(v)}{4(-1)^{\sigma_+(L)}} \left( u^{2-2m} v^{2-2n} 
+ u^{2+2m} v^{-2-2n}  
+u^{-2-2m} v^{2+2n}
+ u^{-2+2m} v^{-2+2n}\right)\\
&= -\frac{d(u)d(v)}{4(-1)^{\sigma_+(L)}} (u^2 v^{-2n} + u^{-2} v^{2n})(u^{2m} v^{-2} + u^{-2m} v^2),  
\end{align*}
where the third equality follows from the definition of Kirby color, the forth one is because a positive full-twist contribute $t^{-2N}$ if the strand has color $(t, N)$, and the fifth one follows from Example~\ref{example}.

Here note that (\ref{eq:Lens}) implies $u^m v^{-1} =1$, $u^{-1} v^n =1$.  Thus
\begin{align*}
\Delta(L(mn-1, n), \omega)&= -\frac{d(u)d(v)}{4(-1)^{\sigma_+(L)}} (u^2 v^{-2n} + u^{-2} v^{2n})(u^{2m} v^{-2} + u^{-2m} v^2) \\
&= -\frac{d(u)d(v)}{4(-1)^{\sigma_+(L)}} ((u v^{-n})^2 + (u^{-1} v^{n})^2)((u^{m} v^{-1})^{2} + (u^{-m} v)^2)\\
&= (-1)^{\sigma_+(L)+1} d(u)d(v).  
\end{align*}
Then we have
\begin{prop}
\label{prop51}
\[\Delta(L(mn-1, n), \omega) = (-1)^{\sigma_+(L)+1} d(u)d(v).\]
\end{prop}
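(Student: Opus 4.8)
The plan is to compute $\Delta(L)$ for the two-component surgery link $L$ carrying Kirby colors $\Omega(u,1)$ and $\Omega(v,1)$, and then to let the homological constraints imposed by $\omega$ force a dramatic simplification. Since $L$ has two components, $r(L)=2$ and the normalization reads $\Delta(L(mn-1,n),\omega)=\Delta(L)/\bigl(4(-1)^{\sigma_+(L)}\bigr)$, so the whole problem reduces to evaluating $\Delta(L)$. First I would expand both Kirby-colored strands by the defining relation for $\Omega$: each Kirby color splits into a difference of two genuinely colored strands, with colors $(u,1)$ or $(u^{-1},1)$ on the left and $(v,1)$ or $(v^{-1},1)$ on the right, each term weighted by $d(u)$ or $d(v)$. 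Expanding both simultaneously yields four honestly colored diagrams sharing the prefactor $d(u)d(v)$ and carrying alternating signs $+,-,-,+$.

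Next I would strip off the framing twists. The relevant entry of Table~\ref{viro1} says that a positive full twist on a strand colored $(t,N)$ multiplies the evaluation by $t^{-2N}$; since every weight here equals $1$, the $m$ left twists contribute $u^{\pm 2m}$ and the $n$ right twists contribute $v^{\pm 2n}$, the signs being read off from the orientation present in each of the four terms. What remains in each term is a basic clasp whose scalar is handed to us directly by Example~\ref{example}. Assembling the four resulting monomials, the expression organizes itself into the factored form
\[
-\frac{d(u)d(v)}{4(-1)^{\sigma_+(L)}}\,\bigl(u^{2}v^{-2n}+u^{-2}v^{2n}\bigr)\bigl(u^{2m}v^{-2}+u^{-2m}v^{2}\bigr).
\]

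The decisive step is to invoke the homology of $M$. The presentation \eqref{eq:Lens} forces $\omega$ to satisfy $u^{m}v^{-1}=1$ and $u^{-1}v^{n}=1$. Rewriting the two quadratic factors as $(uv^{-n})^2+(u^{-1}v^{n})^2$ and $(u^{m}v^{-1})^2+(u^{-m}v)^2$, each is a sum of the square of a unit and the square of its inverse, both equal to $1$; hence each factor equals $2$ and their product equals $4$. The prefactor then collapses: $-d(u)d(v)/\bigl((-1)^{\sigma_+(L)}\bigr)=(-1)^{\sigma_+(L)+1}d(u)d(v)$, which is exactly the asserted value.

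I expect the only genuinely delicate part to be the orientation and sign bookkeeping in the middle stage: one must correctly pair, in each of the four expanded terms, the chosen orientation of each strand with the sign of its twist exponent and with the matching value from Example~\ref{example}. Once the factored form above is reached, the homological relations make the collapse to $(-1)^{\sigma_+(L)+1}d(u)d(v)$ automatic, so no estimate or limiting argument is needed---the content is entirely algebraic.
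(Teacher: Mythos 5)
Your proposal is correct and takes essentially the same route as the paper's own proof: expand both Kirby colors into four signed terms with prefactor $d(u)d(v)$, strip the $m$ and $n$ full twists via the $t^{-2N}$ rule from Table~\ref{viro1}, evaluate the remaining clasps by Example~\ref{example}, factor the four monomials, and use the relations $u^{m}v^{-1}=1$ and $u^{-1}v^{n}=1$ forced by \eqref{eq:Lens} to collapse each factor to $2$. The orientation and sign bookkeeping you flag as delicate is carried out in exactly this way in the paper, so there is nothing to add.
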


\subsection{$L(7, 1)$ and $L(7, 2)$}
It is known that lens spaces $L(7, 1)$ and $L(7, 2)$ are homotopy equivalent but not homeomorphic. We show that our invariant can distinguish them.

Let $\xi=\mathrm{exp}(\frac{2\pi i}{7})$, $B=\mathbb{Q}(\pi, \xi)$ the extension field of $\mathbb{Q}$ generated by $\pi$ and $\xi$, and $G=\mathbb{Z}\langle \pi, \xi \rangle$ the abelian group generated by $\pi$ and $\xi$. We consider $\Delta (L(7, 1), \omega)$ and $\Delta (L(7, 2), \omega)$ for this $1$-palette $(B, G)$.

\begin{prop}\label{Prop5152}
The invariant $\Delta (M, \omega)$ corresponding to the $1$-palette $(B, G)$ where $B=\mathbb{Q}(\pi, \xi)$ and $G=\mathbb{Z}\langle \pi, \xi \rangle$ distinguishes $L(7, 1)$ and $L(7, 2)$.  
More concretely, there exists a cohomology class $\omega_0$ for $L(7, 1)$ such that for any cohomology class $\omega$ for $L(7, 2)$, we have 
\[
\Delta (L(7, 1), \omega_0)  \neq \Delta (L(7, 2), \omega).  
\]
\end{prop}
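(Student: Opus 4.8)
The plan is to evaluate both invariants in closed form via Proposition~\ref{prop51} and then compare the resulting finite lists of numbers. First I would realize the two lens spaces in the family of Section 5.1 by writing $L(7,1)=L(8\cdot 1-1,1)$ and $L(7,2)=L(4\cdot 2-1,2)$, so that the surgery presentation applies with $(m,n)=(8,1)$ and $(m,n)=(4,2)$ respectively. The linking matrices are $\left(\begin{smallmatrix} 8 & -1 \\ -1 & 1\end{smallmatrix}\right)$ and $\left(\begin{smallmatrix} 4 & -1 \\ -1 & 2\end{smallmatrix}\right)$; each has determinant $7$ and positive trace, hence two positive eigenvalues, so $\sigma_+(L)=2$ in both cases and the prefactor $(-1)^{\sigma_+(L)+1}$ equals $-1$.

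Next I would solve the defining relations of the cohomology class. Since $H_1(L(7,q);\mathbb{Z})\cong\mathbb{Z}/7\mathbb{Z}$ is pure $7$-torsion while $G\cong\mathbb{Z}\oplus\mathbb{Z}/7\mathbb{Z}$ (with $\langle\pi\rangle$ the free summand and $\langle\xi\rangle$ the torsion), every $\omega$ lands in $\langle\xi\rangle$; thus $\pi$ never enters the computation and is present only to secure the hypothesis of Theorem~\ref{mainresult1} that $G$ contains $\mathbb{Z}$ but no $\mathbb{Z}/2\mathbb{Z}$. Writing $u=\omega([m_1])=\xi^a$, the relation \eqref{eq:Lens} gives $v=u^m$ and $u=v^n$, which for $L(7,1)$ forces $v=\xi^a$ and for $L(7,2)$ forces $u=\xi^b,\ v=\xi^{4b}$; computability and admissibility ($u\neq 1$, $u^4\neq 1$) just require $a,b\in\{1,\dots,6\}$, using that $7$ is prime.

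Then Proposition~\ref{prop51} yields $\Delta(L(7,1),\omega)=-d(\xi^a)^2$ and $\Delta(L(7,2),\omega)=-d(\xi^b)\,d(\xi^{4b})$, where $d(\xi^k)=(\xi^{2k}-\xi^{-2k})^{-1}$ and $\xi^{8b}=\xi^{b}$. Using $\xi^k-\xi^{-k}=2i\sin(2\pi k/7)$ these simplify to the real numbers $\Delta(L(7,1),\omega)=\tfrac{1}{4}\sin^{-2}(4\pi a/7)$ and $\Delta(L(7,2),\omega)=\tfrac{1}{4}\bigl(\sin(4\pi b/7)\sin(2\pi b/7)\bigr)^{-1}$. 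The $L(7,1)$ values are all strictly positive, whereas $\sin(4\pi b/7)\sin(2\pi b/7)>0$ holds only for $b\in\{1,6\}$; this positivity observation collapses the whole claim to a single comparison, since the four classes $b\in\{2,3,4,5\}$ give negative values that cannot meet any positive $L(7,1)$ value.

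The remaining point — the one genuinely requiring care — is to rule out the unique positive $L(7,2)$ value $\tfrac14(\sin(4\pi/7)\sin(2\pi/7))^{-1}$ (attained for $b\in\{1,6\}$). Here I would simply take $\omega_0$ with $a=2$, so $u=v=\xi^2$ and $\Delta(L(7,1),\omega_0)=\tfrac14\sin^{-2}(\pi/7)$ (using $\sin^2(8\pi/7)=\sin^2(\pi/7)$), and show it is the \emph{largest} of all values in play: it suffices that $\sin(4\pi/7)\sin(2\pi/7)>\sin^2(\pi/7)$. By the product-to-sum identities this is equivalent to $2\cos(2\pi/7)-\cos(6\pi/7)>1$, which is elementary, since $2\pi/7<\pi/3$ gives $\cos(2\pi/7)>\tfrac12$ (hence $2\cos(2\pi/7)>1$) while $\cos(6\pi/7)<0$. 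Consequently $\Delta(L(7,1),\omega_0)$ strictly exceeds every $\Delta(L(7,2),\omega)$, proving the proposition. (A more uniform route, should one want it, is to verify directly that $\sin(4\pi/7)\sin(2\pi/7)\neq\sin^2(2\pi j/7)$ for $j=1,2,3$ by eliminating $\cos(6\pi/7)$ through $\cos(2\pi/7)+\cos(4\pi/7)+\cos(6\pi/7)=-\tfrac12$ and using that $\{1,\cos(2\pi/7),\cos(4\pi/7)\}$ is a $\mathbb{Q}$-basis of the cubic field $\mathbb{Q}(\cos 2\pi/7)$, so each putative equality forces an impossible rational relation.) Everything preceding this last inequality is a direct substitution into Proposition~\ref{prop51}, so the single elementary estimate is the only real obstacle.
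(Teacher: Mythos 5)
Your proposal is correct, and it follows the paper's skeleton up to the last step: you realize the two spaces as $L(8\cdot 1-1,1)$ and $L(4\cdot 2-1,2)$, enumerate the nontrivial cohomology classes from \eqref{eq:Lens} (your parametrization $u=\xi^b$, $v=\xi^{4b}$, $b\in\{1,\dots,6\}$ reproduces exactly the paper's six classes $\omega_1,\dots,\omega_6$, e.g.\ $b=2$ gives $\omega_1:(\xi^2,\xi)$), note $\sigma_+=2$ for both linking matrices, and substitute into Proposition~\ref{prop51} — all of this matches. Where you genuinely diverge is in proving the non-equality. The paper takes $\omega_0:(\xi,\xi)$ (your $a=1$; you take $a=2$ instead) and verifies the six inequalities $d(u_i)d(v_i)\neq d(\xi)^2$ case by case inside the cyclotomic field, displaying only the case $\omega_1$, where the putative equality reduces to $\xi^2+\xi^{-2}=1$, a polynomial relation of degree $<6$ forbidden by the minimal polynomial $\sum_{k=0}^{6}\xi^k=0$; the remaining five cases are left as ``direct calculations.'' You instead pass to real trigonometric values through the fixed embedding $\xi=\exp(2\pi i/7)$: the identities $d(\xi^k)=(2i\sin(4\pi k/7))^{-1}$, $d(\xi^{4b})=(2i\sin(2\pi b/7))^{-1}$ and $\sin^2(8\pi/7)=\sin^2(\pi/7)$ all check out, the sign analysis correctly isolates $b\in\{1,6\}$ as the only positive $L(7,2)$ values, and the single surviving comparison is settled by the elementary estimate $2\cos(2\pi/7)-\cos(6\pi/7)>1$, which is valid as you argue ($\cos(2\pi/7)>\tfrac12$ since $2\pi/7<\pi/3$, and $\cos(6\pi/7)<0$). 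What your route buys is economy and explicitness — one elementary inequality replaces six cyclotomic-field verifications, and the positivity observation explains structurally why four of the six cases are trivial; what the paper's method buys is algebraic robustness, since the minimal-polynomial argument works uniformly for any primitive seventh root of unity without appealing to a particular archimedean embedding or to sign estimates. Both proofs are complete; yours is a legitimate and arguably cleaner alternative for the final step.
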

\begin{proof}
Note that $L(7, 1)=L(mn-1, n)$ for $m=8, n=1$, and $L(7, 2)=L(mn-1, n)$ for $m=4, n=2$. So we can apply the discussion we did in Section 5.1.
A cohomology class $$\omega: H_1(L(7, 2), \mathbb{Z})\cong \mathbb{Z}/7\mathbb{Z}\to \mathbb{Z}\langle \pi, \xi \rangle$$ is determined by $\omega([m_1])$ and $\omega([m_1])$, which satisfy 
\[
\left(\begin{matrix} 4 & -1  \\ -1 & 2 \end{matrix}\right)  \left(\begin{matrix} \omega[m_1] \\ \omega[m_2] \end{matrix}\right) = \left(\begin{matrix} 1 \\ 1 \end{matrix}\right).  
\] 
So we have totally six non-trivial cohomology classes which are given by
\[  \omega_1 : \left(\begin{matrix} \xi^2 \\ \xi \end{matrix}\right), \,\,
\omega_2 :\left(\begin{matrix} \xi^4 \\ \xi^2 \end{matrix}\right), \,\,\omega_3 :
\left(\begin{matrix} \xi^6 \\ \xi^3 \end{matrix}\right), \,\, \omega_4 :
\left(\begin{matrix} \xi \\ \xi^4 \end{matrix}\right),  \,\, \omega_5 :
\left(\begin{matrix} \xi^3 \\ \xi^5 \end{matrix}\right),\,\, \omega_6 :
\left(\begin{matrix} \xi^5 \\ \xi^6 \end{matrix}\right).     
\] 
Let $u_i=\omega_i([m_1])$ and $v_i=\omega_i([m_2])$. By Prop. \ref{prop51} we have $$\Delta(L(7, 2), \omega_i)=-d(u_i)d(v_i).$$
 
Similarly we can consider the non-trivial cohomology classes for $L(7, 1)$. We see that $\omega_0:\begin{pmatrix} \xi \\ \xi \end{pmatrix}$ is one of them. The corresponding invariant is
$$\Delta(L(7, 1), \omega_0)=-d(\xi)d(\xi).$$

We claim that $\Delta(L(7, 2), \omega_i)\neq \Delta(L(7, 1), \omega_0)$ for $1\leq i \leq 6$, which can be confirmed by directly calculations. For instance $\Delta(L(7, 2), \omega_1)=\Delta(L(7, 1), \omega_0)\iff d(\xi^2)=d(\xi)\iff \xi^4-\xi^{-4}=\xi^2-\xi^{-2}\iff \xi^2+\xi^{-2}=1$, which is impossible since the minimal polynomial of $\xi$ is $\sum_{k=0}^6\xi^k=0$.
\end{proof}

\bibliographystyle{siam}
\bibliography{bao}

\begin{thebibliography}{1}

\bibitem{MR3286896}
{\sc F.~Costantino, N.~Geer, and B.~Patureau-Mirand}, {\em Quantum invariants
  of 3-manifolds via link surgery presentations and non-semi-simple
  categories}, J. Topol., 7 (2014), pp.~1005--1053.

\bibitem{MR3785790}
{\sc N.~P. Ha}, {\em Topological invariants from quantum group {$\mathcal
  U_{\xi}\mathfrak{sl}(2|1)$} at roots of unity}, Abh. Math. Semin. Univ.
  Hambg., 88 (2018), pp.~163--188.

\bibitem{MR467753}
{\sc R.~Kirby}, {\em A calculus for framed links in {$S\sp{3}$}}, Invent.
  Math., 45 (1978), pp.~35--56.

\bibitem{MR1091619}
{\sc N.~Reshetikhin and V.~G. Turaev}, {\em Invariants of {$3$}-manifolds via
  link polynomials and quantum groups}, Invent. Math., 103 (1991),
  pp.~547--597.

\bibitem{MR2255851}
{\sc O.~Y. Viro}, {\em Quantum relatives of the {A}lexander polynomial},
  Algebra i Analiz, 18 (2006), pp.~63--157.

\end{thebibliography}

\end{document}